\newtheorem{theorem}{Theorem}[section]
\newtheorem{lemma}{Lemma}[section]
\theoremstyle{definition}
\numberwithin{equation}{section}
\newcommand{\R}{\mathbb{R}}
\newcommand{\dd}{\mathop{}\!\mathrm{d}}
\newcommand{\set}[1]{\left\{#1\right\}}
\newcommand{\pd}{\partial}
\newcommand{\uS}{\mathbb{S}^{n-1}}
\newcommand{\MA}{Monge-Amp\`ere }
\newcommand{\Sy}{\mathbb{S}^1}
\newcommand{\II}{I\!\!I}
\newcommand{\III}{I\!\!I\!\!I}
\newcommand{\pdR}{\pd_{\!_R}}
\newcommand{\UB}{\overline{\Lambda_k}}
\newcommand{\LB}{\underline{\lambda_k}}
\begin{document}

\title{On the number of solutions to the planar dual Minkowski problem}

\author{YanNan Liu}
\address{School of Mathematics and Statistics, Beijing Technology and Business University, Beijing 100048, P.R. China}
\email{liuyn@th.btbu.edu.cn}

\author{Jian Lu}
\address{School of Mathematical Sciences, South China Normal University, Guangzhou 510631, P.R. China}
\email{lj-tshu04@163.com}
%\email{jianlu@m.scnu.edu.cn; \  lj-tshu04@163.com}

\thanks{The first author was supported by
  Natural Science Foundation of China (12071017 and 12141103), and
  Natural Science Foundation of Beijing Municipality (1212002).
  The second author was supported by
  Natural Science Foundation of China (12122106).}

\date{}

\begin{abstract}
  The dual Minkowski problem in the two-dimensional plane is studied in this
  paper. By combining the theoretical analysis and numerical estimation of an
  integral with parameters, we find the number of solutions to this problem for
  the constant dual curvature case when $0<q\leq4$. An improved nonuniqueness
  result when $q>4$ is also obtained. As an application, a result on the
  uniqueness and nonuniqueness of solutions to the $L_p$-Alexandrov problem is
  obtained for $p<0$. 
\end{abstract}

\keywords{
  Monge-Amp\`ere equation,
  dual Minkowski problem,
  uniqueness and nonuniqueness of solutions,
  integral estimation.
}

\makeatletter
\@namedef{subjclassname@2020}{\textup{2020} Mathematics Subject Classification}
\makeatother

\subjclass[2020]{35J96, 52A20, 34C25.}

\maketitle
\vskip4ex

%% ------------------------------------------------------------
\section{Introduction}

The dual Minkowski problem was first proposed by Huang-Lutwak-Yang-Zhang in
their groundbreaking paper \cite{HLYZ.Acta.216-2016.325} in 2016.
It asks what are the necessary and sufficient conditions on a Borel measure $\mu$
defined on the unit sphere $\uS$ such that $\mu$ is the dual curvature
measure of some convex body $K$ in the Euclidean space $\R^n$.
When the given measure $\mu$ is absolutely continuous with respect to the
standard measure on $\uS$, the \emph{dual Minkowski problem} is equivalent to solving
the following \MA type equation: 
\begin{equation} \label{dMP}
  h (|\nabla h|^2+h^2)^{\frac{q-n}{2}} \det(\nabla^2h+hI) = f \quad \text{ on } \ \uS,
\end{equation}
where $h$ is the support function of some unknown convex body $K$ in $\R^n$,
$\nabla$ is the covariant derivative with respect to an orthonormal frame on
$\uS$, $q\in\R$ is a given number, $I$ is the unit matrix of order $n-1$, and
$f$ is a given nonnegative integrable function on $\uS$. 
Note that $\nabla h(x) +h(x) x$ is the point on $\pd K$ whose unit outer normal
vector is $x\in\uS$.

The dual Minkowski problem unifies two important prescribed curvature problems.
One is the logarithmic Minkowski problem, corresponding to the case when $q=n$;
see e.g. 
\cite{BHZ.IMRNI.2016.1807,
  BLYZ.JAMS.26-2013.831,
  CLZ.TAMS.371-2019.2623,
  KM.MAMS.277-2022.1,
  Sta.Adv.180-2003.290,
  Zhu.Adv.262-2014.909}.
The other is the Alexandrov problem \cite{Ale.CRDASUN.35-1942.131}, which is the
prescribed Alexandrov integral curvature problem and corresponds to the case
when $q=0$. 
Huang-Lutwak-Yang-Zhang \cite{HLYZ.Acta.216-2016.325} proved a variational
formula for the dual Minkowski problem, based on which a result on the existence
of even solutions was obtained. 
Since then, there have been many studies on this problem
\cite{BHP.JDG.109-2018.411,
  BLY+.Adv.356-2019.106805,
  CL.Adv.333-2018.87,
  HP.Adv.323-2018.114,
  HJ.JFA.277-2019.2209,
  JW.JDE.263-2017.3230,
  LSW.JEMSJ.22-2020.893,
  Zha.CVPDE.56-2017.18,
  Zha.JDG.110-2018.543}.
In addition, various extensions of the dual Minkowski problem have also received
a lot of attention; see e.g.
\cite{BF.JDE.266-2019.7980,
  CHZ.MA.373-2019.953,
  CCL.AP.14-2021.689,
  CL.JFA.281-2021.109139,
  CLLX.JGA.32-2022.40,
  GHW+.CVPDE.58-2019.12,
  GHXY.CVPDE.59-2020.15,
  HLYZ.JDG.110-2018.1,
  HZ.Adv.332-2018.57,
  LLL.IMRNI.-2022.9114,
  LL.TAMS.373-2020.5833,
  LYZ.Adv.329-2018.85}.

In this paper we are concerned with the number of
solutions to the dual Minkowski problem \eqref{dMP}. 
When $q<0$, the uniqueness was proved by Zhao \cite{Zha.CVPDE.56-2017.18}.
When $q=0$, the uniqueness (up to a multiplicative constant) was obtained by
Alexandrov \cite{Ale.CRDASUN.35-1942.131}. 
When $q=n$, the solutions describe self-similar shrinking hypersurfaces for
Gauss curvature flows, whose uniqueness was proved by 
Abresch-Langer \cite{AL.JDG.23-1986.175},
Andrews \cite{And.Invent.138-1999.151, And.JAMS.16-2003.443},
Huang-Liu-Xu \cite{HLX.Adv.281-2015.906}, and
Brendle-Choi-Daskalopoulos \cite{BCD.Acta.219-2017.1} for $f\equiv1$;
by Dohmen-Giga \cite{DG.PJASAMS.70-1994.252}, and
Gage \cite{Gag.DMJ.72-1993.441} for even $f$ and $n=2$;
and nonuniqueness was found by
Yagisita \cite{Yag.CVPDE.26-2006.49} for some non-even $f$ and $n=2$.
When $q>2n$, the nonuniqueness of solutions was obtained for $f\equiv1$ by
Huang-Jiang \cite{HJ.JFA.277-2019.2209} and
Chen-Chen-Li \cite{CCL.AP.14-2021.689}. 

The remaining case when $0<q\leq 2n$ $(q\neq n)$ is more complicated.
As far as we know, there is only one result, which is about the uniqueness of
\emph{even} solutions for $0<q<n$ and $f\equiv1$; see
Chen-Huang-Zhao \cite{CHZ.MA.373-2019.953}.

The aim of this paper is to solve the remaining case for $f\equiv1$ in the
two-dimensional plane.
In this situation, the dual Minkowski problem \eqref{dMP} is reduced to the
following second-order ordinary differential equation:
\begin{equation} \label{dMP21}
  h (h'^2+h^2)^{\frac{q-2}{2}}(h''+h) = 1 \quad \text{ on } \ \Sy,
\end{equation}
where the unit circle $\Sy$ is parametrized by its arc length
$\theta\in[0,2\pi]$, and the superscript $'$ denotes $\frac{\dd}{\dd\theta}$.
As can be seen from the introduction in the previous paragraph, there is no
result on the uniqueness or nonuniqueness of solutions to Eq. \eqref{dMP21} when
$0<q\leq4$ $(q\neq2)$. 
Here in the present paper we provide a complete answer.

\begin{theorem}\label{thm1}
  \textup{(a)}
  When $0<q<1$, Eq. \eqref{dMP21} has exactly two solutions (up to rotation), of
  which one is the constant solution and the other is a positive smooth non-constant
  solution.

  \textup{(b)}
  When $1\leq q\leq 4$, Eq. \eqref{dMP21} has only one solution, which is just
  the constant solution.
\end{theorem}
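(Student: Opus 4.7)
The natural first step is to derive a conservation law for Eq.~\eqref{dMP21}. Multiplying through by $h'$, using $h'(h+h'')=\tfrac12(h'^2+h^2)'$, and rewriting Eq.~\eqref{dMP21} as $(h'^2+h^2)^{(q-2)/2}(h+h'')=1/h$, one checks at once that
\[
\frac{\dd}{\dd\theta}\!\left[\frac{1}{q}(h'^2+h^2)^{q/2} - \ln h\right] = 0.
\]
Set $\phi(s):=s^q/q-\ln s$; this function has a unique minimum $\phi(1)=1/q$ with $\phi\to+\infty$ at $0^+$ and $+\infty$. So for every $C>1/q$ the level set is a simple closed orbit in the $(h,h')$-plane with turning points $h_{\min}(C)<1<h_{\max}(C)$ solving $\phi(h_{\min})=\phi(h_{\max})=C$, and its period is
\[
T(C)=2\int_{h_{\min}(C)}^{h_{\max}(C)} \frac{\dd h}{\sqrt{(q(C+\ln h))^{2/q}-h^2}}.
\]
A non-constant positive $2\pi$-periodic solution of Eq.~\eqref{dMP21} corresponds, up to rotation, exactly to a pair $(C,k)$ with $C>1/q$, $k\in\mathbb{Z}_{\geq 1}$, and $T(C)=2\pi/k$. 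Linearizing Eq.~\eqref{dMP21} about $h\equiv 1$ yields $u''+qu=0$, so $\lim_{C\downarrow 1/q} T(C) = 2\pi/\sqrt{q}$.

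For part (b), the plan is to prove the strict uniform bound $\pi<T(C)<2\pi$ for every $q\in[1,4]$ and every $C>1/q$. Once this is in hand, $T(C)=2\pi/k$ is impossible: $k=1$ is ruled out by the upper bound and $k\geq 2$ (which forces $T\leq\pi$) by the lower bound, so only the constant solution survives. The bound itself is obtained by careful estimation of the integrand. A change of variable that removes the square-root singularities at the turning points (for instance the amplitude substitution $h=\tfrac12(h_{\min}+h_{\max})+\tfrac12(h_{\max}-h_{\min})\sin\tau$, or a phase-plane polar parametrization $h=r\cos\psi$, $h'=r\sin\psi$) converts $T(C)$ into a smooth integral in the new variable, on which convexity and monotonicity properties of $\phi$ translate into pointwise inequalities. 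At the endpoints $q=1$ and $q=4$ the leading value $2\pi/\sqrt q$ coincides with the forbidden $2\pi$ or $\pi$, so a second-order expansion in $C-1/q$ near the equilibrium is needed to verify that $T(C)$ moves strictly into the open interval $(\pi,2\pi)$.

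For part (a), the opening value $T(1/q^+)=2\pi/\sqrt q$ exceeds $2\pi$, and an asymptotic analysis as $C\to+\infty$ shows that $T(C)\to\pi$: under the rescaling $h=(qC)^{1/q}\hat h$, $\hat h$ lies in $[\hat h_{\min},1]$ with $\hat h_{\min}\to 0$, the orbit degenerates to the right half of the harmonic unit circle, and the limit dynamics $\hat h''+\hat h=0$ traverses it in time $\pi$. By continuity and the intermediate value theorem there exists $C_\star>1/q$ with $T(C_\star)=2\pi$. Strict monotonicity of $T(\cdot)$, together with $T(C)>\pi$ throughout, then yields uniqueness of $C_\star$ and excludes $T(C)=2\pi/k$ for all $k\geq 2$. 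The principal obstacle in both parts is the uniform control of the two-parameter integral $T(q,C)$ — in particular the sharp bounds near the critical $q\in\{1,4\}$ where the bifurcation limit coincides with a forbidden value. As signaled in the abstract, this is achieved by combining analytic (convexity-based) inequalities with numerical verification of the parametrized integral on the compact range of $q$.
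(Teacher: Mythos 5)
Your reduction is the same as the paper's: the conservation law $(h'^2+h^2)^{q/2}-q\log h=\mathrm{const}$, the period integral between turning points, the correspondence with $T(C)=2\pi/k$, and the limiting values $2\pi/\sqrt q$ at the equilibrium and $\pi$ at infinity all match the paper's Section 2 and Lemma 3.4. However, the two steps that carry the entire difficulty are asserted rather than proved, and neither is routine. First, for part (a) you invoke ``strict monotonicity of $T(\cdot)$'' to get uniqueness of $C_\star$ and the bound $T>\pi$. This monotonicity is genuinely hard: the paper proves it only for $0<q\le 2$, by a change of variables that reduces the derivative of the integrand to the case $q=2$ via a convexity comparison (Lemma 3.2), and then appeals to a theorem of Andrews for $q=2$; moreover monotonicity \emph{fails} for $q>3$, so no soft convexity argument on $\phi$ can deliver it. Without this, part (a) is unproven.

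Second, for part (b) the sharp endpoint is $q=4$, where you must show $T(C)>\pi$ \emph{strictly} for every $C>1/q$, while both limits of $T$ (as $C\downarrow 1/q$ and as $C\to\infty$) equal exactly the forbidden value $\pi$. Your plan — a second-order expansion near the equilibrium plus ``numerical verification of the parametrized integral on the compact range of $q$'' — does not close this: the problematic parameter is $C$ (equivalently $r=h_{\max}/h_{\min}$), which ranges over an unbounded set, and a limit statement at $C=\infty$ gives no strict inequality for large finite $C$. The paper needs three separate devices here: an explicit comparison $f(x)-x^2<(r-x)(r+x-2)$ valid for $r\ge\tilde r\approx 8.61$ giving $\Theta_4>\pi/2$ analytically at the unbounded end; a quantitative Taylor expansion with controlled remainders for $r\le 1.22$; and, on the remaining compact interval, rigorous elementary upper/lower bounds for the integral combined with a proof that $\Theta_4(r)/\log r$ is decreasing and convex, so that finitely many certified evaluations imply the inequality on whole subintervals. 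A generic ``numerics plus convexity of $\phi$'' plan does not substitute for these; in particular, evaluating an integral numerically at sample points proves nothing on an interval without such a convexity lemma. (A smaller point: the paper also uses monotonicity of $\Theta_q$ in $q$ to reduce the whole range $1\le q\le 4$ to the two extreme values $q=1$ and $q=4$, which is what makes the verification one-dimensional; your two-parameter verification would be strictly harder.)
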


The dual Minkowski problem is defined only among convex bodies containing the
origin in their interiors \cite{HLYZ.Acta.216-2016.325}, i.e., it requires a
solution to be positive.
For Eq. \eqref{dMP21}, such a solution is obviously smooth.
We note that Theorem \ref{thm1} is still true when a solution is required only
to be nonnegative and $C^2$; see Lemma \ref{lem005}.

As a by-product of proving the main theorem above, the following nonuniqueness
result for $q>4$ is obtained. 
\begin{theorem}\label{thm2}
  When $q>4$, Eq. \eqref{dMP21} admits at least $\lceil\! \sqrt{q} \,\rceil -2$
  non-constant positive smooth solutions (up to rotation), where $\lceil\!
  \sqrt{q} \,\rceil$ is the smallest integer that is greater than or equal to
  $\sqrt{q}$. 
\end{theorem}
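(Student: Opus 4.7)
My approach is to reduce \eqref{dMP21} to a planar Hamiltonian system via its first integral and then apply the intermediate value theorem to the period map. Multiplying \eqref{dMP21} by $h'/h$ and integrating yields the conserved energy
\[
E(h,h'):=\frac{1}{q}(h'^2+h^2)^{q/2}-\ln h,
\]
whose only critical point is the equilibrium $(h,h')=(1,0)$ with $E=1/q$. For every $c>1/q$ the level set $\{E=c\}$ is a simple closed curve around $(1,0)$ and produces a positive smooth periodic orbit of \eqref{dMP21} with minimum period
\[
T(c)=2\int_{h_{\min}(c)}^{h_{\max}(c)}\frac{dh}{\sqrt{(q(c+\ln h))^{2/q}-h^{2}}},
\]
where $h_{\min}(c)<1<h_{\max}(c)$ are the two roots of $\frac{1}{q}h^{q}-\ln h=c$. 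Such an orbit descends to a $2\pi$-periodic solution on $\mathbb{S}^{1}$ precisely when $T(c)=2\pi/k$ for some positive integer $k$, in which case its minimum period equals $2\pi/k$; solutions corresponding to distinct $k$ are therefore inequivalent under rotation.

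The strategy then hinges on the two endpoint asymptotics of the continuous map $c\mapsto T(c)$. Linearization of \eqref{dMP21} at $h\equiv 1$ gives $u''+qu=0$, so $\lim_{c\to(1/q)^{+}}T(c)=2\pi/\sqrt{q}$. For the large-$c$ asymptotic I would substitute $h=h_{\max}(c)\sin\phi$, use the identity $q(c+\ln h_{\max})=h_{\max}^{q}$, and expand $(1+x)^{2/q}=1+2x/q+O(x^2)$. After separating the bulk region from the thin boundary layer near the lower turning point, this yields
\[
T(c)=\pi+2h_{\max}(c)^{-q}\int_{0}^{\pi/2}\frac{-\ln\sin\phi}{\cos^{2}\phi}\,d\phi+o(h_{\max}(c)^{-q}).
\]
The integral of $-\ln\sin\phi/\cos^{2}\phi$ over $(0,\pi/2)$ is a finite positive constant: the apparent singularity at $\phi=\pi/2$ is removable because $\ln\sin\phi\sim-(\pi/2-\phi)^{2}/2$ and $\cos^{2}\phi\sim(\pi/2-\phi)^{2}$, while at $\phi=0$ the integrand is dominated by $-\ln\phi$ which is integrable. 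Hence $T(c)>\pi$ for all sufficiently large $c$ and $T(c)\to\pi^{+}$ as $c\to\infty$.

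Because $q>4$ forces $2\pi/\sqrt{q}<\pi$, the continuous function $T$ takes values below $\pi$ near $c=(1/q)^{+}$ and strictly above $\pi$ for large $c$. By the intermediate value theorem, the range of $T$ covers the interval $(2\pi/\sqrt{q},\pi]$. For every integer $k$ with $2\le k\le\lceil\sqrt{q}\rceil-1$ (equivalently, $k<\sqrt{q}$), the target value $2\pi/k$ lies in this interval, so there exists $c_{k}>1/q$ with $T(c_{k})=2\pi/k$; the associated orbit is a non-constant, positive, smooth, $2\pi$-periodic solution of \eqref{dMP21} with minimum period $2\pi/k$. Distinct $k$ yield inequivalent solutions, giving the desired $\lceil\sqrt{q}\rceil-2$ count. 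The main technical obstacle is the rigorous justification of the expansion $T(c)=\pi+O(h_{\max}^{-q})$: the upper turning point is controlled by the substitution above, but the lower turning point $h_{\min}\to 0^{+}$ creates a genuine boundary layer in which $\ln h\asymp-c$ and the formal expansion degenerates, so its contribution to $T(c)$ must be shown to be negligible by a direct matched-asymptotic estimate. Since only the single inequality $T(c_{*})>\pi$ at some $c_{*}$ is actually needed for the IVT step, the numerical/analytic integral estimates already developed for Theorem~\ref{thm1} could alternatively be invoked.
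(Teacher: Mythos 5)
Your overall strategy is exactly the paper's: the first integral you write down is the paper's identity \eqref{eq:61} (up to dividing by $q$), your period map $T(c)$ is twice the paper's $\Theta_q(r)$, the limit $2\pi/\sqrt{q}$ at the equilibrium is Lemma \ref{lem009}(a), and the counting of integers $k$ with $2\le k<\sqrt{q}$ matches Lemmas \ref{lem007}--\ref{lem008}(c). The one load-bearing step, however, is left unproved: you need the strict inequality $T(c_*)>\pi$ (equivalently $\Theta_q(r)>\pi/2$) at some large amplitude, and you obtain it only from a formal expansion $T(c)=\pi+2h_{\max}^{-q}\int_0^{\pi/2}\frac{-\ln\sin\phi}{\cos^2\phi}\,d\phi+o(h_{\max}^{-q})$ whose justification you yourself flag as the ``main technical obstacle.'' The difficulty is real: the expansion $(1+x)^{2/q}=1+2x/q+O(x^2)$ is invalid near the lower turning point, where $q h_{\max}^{-q}\ln(h/h_{\max})$ is of order $-1$ rather than small, and the error terms in the bulk must be controlled at the scale $h_{\max}^{-q}\asymp 1/(q\log r)$, which is delicate. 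Without this, the IVT argument only yields values of $T$ in the open interval $(2\pi/\sqrt{q},\pi)$, which misses the endpoint $k=2$ and, for $4<q\le 9$, gives nothing at all.

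The paper closes this gap differently and much more cheaply (Lemma \ref{lem010}): for $r\ge\tilde r$, where $\tilde r$ is the unique root of $r^q-qr^{q-1}\log r-1=0$, one has the pointwise bound
\begin{equation*}
\Bigl(1+\tfrac{r^q-1}{\log r}\log x\Bigr)^{\frac{2}{q}}-x^2<(r-x)(r+x-2),\qquad x\in(1,r),
\end{equation*}
proved by a convexity argument for $g(x)=(r^2-2r+2x)^{q/2}-1-\frac{r^q-1}{\log r}\log x$, and the comparison integral evaluates exactly to $\pi/2$. This gives $\Theta_q(r)>\pi/2$ for all $r\ge\tilde r$ with no asymptotic expansion at all. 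Note also that your proposed fallback --- invoking the estimates from Theorem \ref{thm1} --- does not work as stated: those estimates concern $\Theta_4$ only, and by Lemma \ref{lem01} one has $\Theta_q(r)<\Theta_4(r)$ for $q>4$, so the inequality $\Theta_4(r)>\pi/2$ transfers in the wrong direction. You would need either to redo the matched asymptotics rigorously or to adopt a comparison argument of the above type; as written, the proof is incomplete at its decisive step.
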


It has already been known that Eq. \ref{dMP21} with $q>4$ admits at least one
non-constant positive smooth solution
\cite{HJ.JFA.277-2019.2209, CCL.AP.14-2021.689}.
Our Theorem \ref{thm2} shows that the number of non-constant solutions will increase
when $q$ becomes larger.

Now we can provide an application of these two theorems.
The $L_p$-Alexandrov problem, which is a natural extension of the Alexandrov
problem, was first studied by Huang-Lutwak-Yang-Zhang \cite{HLYZ.JDG.110-2018.1}.
For the two-dimensional case, when the given measure is the standard spherical
measure on $\Sy$, the $L_p$-Alexandrov problem is equivalent to solving the
following second-order ordinary differential equation:
\begin{equation} \label{LpAP21}
  h^{1-p} (h'^2+h^2)^{-1}(h''+h) = 1 \quad \text{ on } \ \Sy,
\end{equation}
where $p$ is a given real number.

\begin{theorem}\label{thm3}
  \textup{(a)}
  When $-1<p<0$, Eq. \eqref{LpAP21} has exactly two solutions (up to rotation),
  of which one is the constant solution and the other is a positive smooth non-constant
  solution.

  \textup{(b)}
  When $-4\leq p\leq -1$, Eq. \eqref{LpAP21} has only one solution, which is
  just the constant solution.

  \textup{(c)}
  When $p<-4$, Eq. \eqref{LpAP21} admits at least $\lceil\! \sqrt{-p} \,\rceil
  -2$ non-constant positive smooth solutions (up to rotation), where $\lceil\!
  \sqrt{-p} \,\rceil$ is the smallest integer that is greater than or equal to
  $\sqrt{-p}$.
\end{theorem}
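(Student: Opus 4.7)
The equation \eqref{LpAP21} is a second-order autonomous ODE on $\Sy$ of exactly the same type as \eqref{dMP21}, so my plan is to adapt the phase-plane and period-function analysis that powered Theorems~\ref{thm1} and~\ref{thm2} to this setting, replacing the dual-Minkowski first integral by the one naturally associated to \eqref{LpAP21}.

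First I would identify the constant solution and linearize. Plugging $h\equiv c$ into \eqref{LpAP21} gives $c^{-p}=1$, so $h\equiv1$ is the unique positive constant. Writing $h=1+\varepsilon\eta$ and keeping leading-order terms, the equation reduces to $\eta''-p\eta=0$, which for $p<0$ is a harmonic oscillator of period $2\pi/\sqrt{-p}$. The linear bifurcation values are thus precisely $p=-k^{2}$ for positive integers $k$, matching the thresholds $p=-1$ and $p=-4$ appearing in the statement.

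Next I would construct a first integral. Multiplying \eqref{LpAP21} by $h'$ and using $h'(h''+h)=\tfrac{1}{2}(h'^{2}+h^{2})'$, direct integration yields
\[
  h'^{2}+h^{2}=C\exp\!\bigl(\tfrac{2}{p}h^{p}\bigr),
\]
so every non-constant $2\pi$-periodic solution corresponds to a closed orbit of this level set encircling the equilibrium $(1,0)$, parametrized by $C>0$. Unique up to rotation, a non-constant solution exists precisely for each $C$ whose orbit has period $T(C)=2\pi/k$ for some integer $k\ge 1$.

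The main work is then the study of the period function
\[
  T(C)=2\!\int_{h_{\min}(C)}^{h_{\max}(C)}\!\frac{\dd h}{\sqrt{C\exp(2h^{p}/p)-h^{2}}},
\]
proving monotonicity, identifying its equilibrium limit $2\pi/\sqrt{-p}$, and controlling its behavior at the degenerate end of the one-parameter family of closed orbits. Once this is in hand, counting the integers $k$ with $2\pi/k$ in the range of $T$ yields (a) exactly one non-constant solution when $-1<p<0$, (b) no non-constant solution when $-4\le p\le -1$, and (c) at least $\lceil\!\sqrt{-p}\,\rceil-2$ non-constant solutions for $p<-4$, one for each integer $k=2,3,\ldots,\lceil\!\sqrt{-p}\,\rceil-1$. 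The hard part will be carrying out the delicate integral estimates parallel to those used for Theorems~\ref{thm1} and~\ref{thm2}, now with the exponential weight $e^{2h^{p}/p}$ in place of the $q\log h$ term of the dual Minkowski first integral; the theoretical-plus-numerical estimation scheme flagged in the abstract should transfer once this analog is set up.
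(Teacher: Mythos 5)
Your phase--plane setup is internally correct: the first integral $h'^2+h^2=C\exp(2h^p/p)$, the linearization $\eta''-p\eta=0$ with bifurcation values $p=-k^2$, and the reduction to counting integers $k$ with $2\pi/k$ in the range of the period function are all the right analogues of Section \ref{sec2}. But this is a genuinely different route from the paper's, and as written it contains a real gap. The paper does not re-run any analysis for Eq.~\eqref{LpAP21}: it observes that the polar-body duality \eqref{dual}, namely $\tilde h(\tilde\theta)=(h'^2+h^2)^{-1/2}(\theta)$ with $\tilde\theta=\theta+\arctan(h'/h)$, gives a one-to-one correspondence between solutions of \eqref{dMP21} and solutions of \eqref{LpAP21} when $q=-p$, so Theorem \ref{thm3} follows \emph{immediately} from Theorems \ref{thm1} and \ref{thm2} with no new estimates. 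That observation is the entire proof, and your proposal misses it.

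The gap matters because everything you defer to ``delicate integral estimates parallel to those used for Theorems \ref{thm1} and \ref{thm2}'' is precisely where the content of the theorem lives, and there is no a priori guarantee it transfers. Concretely: (i) the monotonicity of the period function in the paper rests on Andrews' Lemma \ref{lemAnd} after a specific change of variables tailored to the $q\log h$ first integral, and you would need a fresh argument of that type for the exponential weight $e^{2h^p/p}$; (ii) for the analogue of $3<q\leq 4$ (i.e.\ $-4\leq p<-3$) the period function is \emph{not} monotone, so you would have to reproduce the entire theoretical-plus-numerical campaign of Section \ref{sec4} (Taylor expansion near the equilibrium, concavity-based Riemann-type bounds, convexity of $\Theta/\log r$, and the table of verified intervals) for a new integrand; and (iii) you would need the analogue of Lemma \ref{lem009} identifying the limit of the period at the degenerate end of the orbit family as $\pi$ (half-period $\pi/2$), which again requires a nontrivial three-piece splitting argument. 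None of this is carried out in the proposal. The one-line repair is to verify that the map \eqref{dual} sends solutions of \eqref{dMP21} bijectively to solutions of \eqref{LpAP21} with $p=-q$ (preserving constancy, positivity, and smoothness), after which all three parts of Theorem \ref{thm3} are restatements of Theorems \ref{thm1} and \ref{thm2}.
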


One can verify that the duality between solutions of equations \eqref{dMP21} and
\eqref{LpAP21} via the concept of polar body is a one-to-one correspondence,
when $q=-p$; see e.g. \cite[Theorem 7.1]{CHZ.MA.373-2019.953}.
Therefore, Theorem \ref{thm3} follows directly from Theorems \ref{thm1} and \ref{thm2}.

For the reader's convenience, we here give the explicit expression of the duality.
Assume $h(\theta)$ is a solution to Eq. \eqref{dMP21}, then define its dual function
\begin{equation} \label{dual}
  \tilde{h}(\tilde{\theta}) =
  (h'^2+h^2)^{-\frac{1}{2}}(\theta)
  \  \text{ with } \ 
  \tilde{\theta}=\theta+\arctan\frac{h'(\theta)}{h(\theta)}.
\end{equation}
We can easily check that $\tilde{\theta}$ is strictly increasing with respect to
$\theta$, and that $\tilde{h}(\tilde{\theta})$ is a solution to Eq. \eqref{LpAP21}.
Conversely, one can construct $h$ from $\tilde{h}$ in the same way.

Similar to Lemma \ref{lem005}, one can easily check that Theorem \ref{thm3} still holds
when a solution is required only to be nonnegative and $C^2$.

Our main Theorem \ref{thm1} is actually to find out the number of $2\pi$-periodic
solutions to a second-order ordinary differential equation \eqref{dMP21}.
A commonly used method is considering solutions to Eq. \eqref{dMP21} with initial conditions,
and analyzing their period to see if it equals $2\pi$.
In Section \ref{sec2}, with some elementary analyses and computations, the
half-period can be expressed as an integral with parameters $\Theta_q(r)$.
Now we just need to analyze all possible values of this integral.
Generally speaking, it is not easy.
When $0<q\leq2$, it is achieved by proving monotonicity,
which is based on a monotonicity result of Andrews \cite{And.JAMS.16-2003.443};
see Section \ref{sec3}.
When $q>3$, one can easily see that $\Theta_q(r)$ is no longer monotonic.
Without monotonicity, analyzing all values of the integral is very difficult.
In Section \ref{sec4},
this difficulty is overcome by combining the theoretical analysis and numerical
estimation for $q=4$.
Then, one can solve all the cases for $1\leq q\leq4$.

After completing this paper, we noticed the paper \cite{LW},
some results of which coincided with this paper.
So let us make some remarks here.

The first time we submitted this paper for publication is June 4, 2022.
On September 29, 2022, the paper \cite{LW} was submitted to arXiv.
This was the first time we knew about the paper.
Then, we submitted our paper to arXiv on September 30, 2022.

The $L_p$ dual Minkowski problem, which is a generalization of Eq. \eqref{dMP21},
is considered in the paper \cite{LW}.
For a large range of $p,q$, uniqueness or nonuniqueness results are obtained.
The method there is also based on
the monotonicity result of Andrews and the duality \eqref{dual}.
Namely, the method and result in our Section \ref{sec3} are also obtained in \cite{LW}.

But it is worth mentioning that we still have the numerical estimation in
Section \ref{sec4}.
As mentioned above, the integral $\Theta_q(r)$ (corresponding to $p=0$) is not
monotonic for $3<q\leq4$.
Therefore, the paper \cite{LW} does not solve the uniqueness problem for this case.
While our paper have solved it by numerical estimation.
In fact, there also exists some other unsolved ranges of $p,q$ by \cite{LW},
precisely because the corresponding integral is not monotonic.
Now, an interesting question is whether our numerical estimation can be further
developed to solve these remaining unsolved cases.

% Our idea of proving the main Theorem \ref{thm1} is inspired by Andrews
% \cite{And.JAMS.16-2003.443}, which obtained the classification of solutions to
% the following second ordinary differential equation 
% \begin{equation}\label{LpMP21}
%   h^{\frac{1}{\alpha}} (h''+h) = 1 \quad \text{ on } \ \Sy,
% \end{equation}
% where $\alpha$ is a nonzero constant.
% The main argument is to express the total curvature of curve segment related to
% any possible solution by a proper integral with parameters, and then analyze all
% possible values of this integral. Only some specific values indicate the
% existence of solutions. 
% Generally, it is very difficult to find out all values of an integral with
% parameters.
% In \cite{And.JAMS.16-2003.443}, some extraordinary techniques have been carried
% out in order to prove the monotonicity of an integral with respect to parameters.
% However, it is very hard for us to further develop these techniques to analyze
% the integral related to Eq. \eqref{dMP21}, since this integral is more
% complicated and lacks monotonicity in general.
% We shall combine the theoretical analysis and numerical estimation to overcome
% the difficulty.

% This paper is organized as follows.
% In Section \ref{sec2}, we find out the integral with parameters which determines
% the existence of solutions to Eq. \ref{dMP21}.
% In Section \ref{sec3}, we prove Theorem \ref{thm1} for $0<q\leq2$ by
% monotonicity of the integral with respect to some parameter. 
% In Section \ref{sec4}, Theorem \ref{thm1} for $2<q\leq4$ is proved through
% theoretical analysis and numerical estimation.

\section{An integral with parameters}
\label{sec2}

In this section, we first give the following simple observation.

\begin{lemma}\label{lem005}
If $h$ is a nonnegative $C^2$ solution to Eq. \eqref{dMP21}, then $h$ is
positive and smooth. 
\end{lemma}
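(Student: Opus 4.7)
The plan is to argue by contradiction: suppose $h(\theta_0)=0$ at some $\theta_0\in\Sy$. Since $h\geq 0$ is $C^2$, the point $\theta_0$ is a minimum, forcing $h'(\theta_0)=0$ and $h''(\theta_0)\geq 0$; in particular $h'(\theta_0)^2+h(\theta_0)^2=0$. The aim is then to examine the left-hand side of \eqref{dMP21} at or near $\theta_0$ and derive a contradiction with its prescribed value $1$.

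The case $q\geq 2$ is immediate: the middle factor $(h'^2+h^2)^{(q-2)/2}$ equals $0$ when $q>2$ and $1$ when $q=2$, so the LHS at $\theta_0$ reduces to $0\cdot h''(\theta_0)=0\neq 1$. For $0<q<2$ the middle factor has a vanishing base and a negative exponent, so I would argue via a limit. In the generic subcase $h''(\theta_0)>0$, Taylor expansion yields $h(\theta)\sim\tfrac12 h''(\theta_0)(\theta-\theta_0)^2$ and $h'(\theta)\sim h''(\theta_0)(\theta-\theta_0)$, hence $(h'^2+h^2)^{(q-2)/2}\sim c\,|\theta-\theta_0|^{q-2}$, and a direct power count gives LHS $\sim c'\,|\theta-\theta_0|^q\to 0$ as $\theta\to\theta_0$, contradicting LHS$=1$.

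The degenerate subcase $h''(\theta_0)=0$ within $0<q<2$ is the step I expect to be the main, albeit minor, obstacle, since $h$ may vanish to an unspecified order and Taylor to order $2$ only gives $o$-asymptotics. The plan is to pick a sequence $\theta_n\to\theta_0$ with $h(\theta_n)>0$, use $C^2$-regularity together with $h(\theta_0)=h'(\theta_0)=h''(\theta_0)=0$ to bound $h(\theta_n)=o((\theta_n-\theta_0)^2)$, $h'(\theta_n)=o(\theta_n-\theta_0)$, and $h''(\theta_n)+h(\theta_n)=o(1)$, and then use the elementary bound $h\leq(h'^2+h^2)^{1/2}$ to estimate $h\cdot(h'^2+h^2)^{(q-2)/2}\leq(h'^2+h^2)^{(q-1)/2}$. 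For $q\geq 1$ this already forces LHS $\to 0$ along the sequence; for $0<q<1$ a slightly more refined power count, comparing the rates of $h$ and $h'^2+h^2$ more carefully and noting that no purely power-law profile $h\sim|\theta-\theta_0|^\beta$ with $\beta\geq 2$ can produce a nonzero limit for $q>0$, yields the same contradiction.

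Once $h>0$ is established on all of $\Sy$, smoothness follows by a standard bootstrap: rewriting \eqref{dMP21} as $h''=h^{-1}(h'^2+h^2)^{(2-q)/2}-h$ exhibits $h''$ as a $C^2$ function of $\theta$ whenever $h\in C^2$ and $h>0$, so $h\in C^4$; iterating the identity then produces derivatives of every order and gives $h\in C^\infty$.
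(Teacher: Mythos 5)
Your strategy is genuinely different from the paper's, and it has a real gap precisely in the range $0<q<1$, which is the hardest case. Your pointwise/asymptotic analysis does dispose of $q\ge 1$: the bound $h\,(h'^2+h^2)^{(q-2)/2}\le (h'^2+h^2)^{(q-1)/2}$ together with the boundedness of $h''+h$ forces the left-hand side of \eqref{dMP21} to tend to $0$ along $\theta_n\to\theta_0$ when $q>1$, your two subcases handle $q=1$, and the generic subcase $h''(\theta_0)>0$ is fine for every $q>0$.

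The gap is the degenerate subcase $h''(\theta_0)=0$ with $0<q<1$. There you only assert that ``a slightly more refined power count'' plus the non-existence of power-law profiles yields a contradiction. A $C^2$ function vanishing to order higher than two at $\theta_0$ need not be asymptotic to any power $|\theta-\theta_0|^\beta$ (it may vanish to infinite order, or oscillate), so checking power-law profiles proves nothing about the general case. Worse, the pointwise tools you list cannot close the argument: for $0<q<1$ the inequality $h\le(h'^2+h^2)^{1/2}$ turns the equation into $1\le M\,(h'^2+h^2)^{(q-1)/2}$ with $M=\sup(h''+h)$, and since the exponent is now negative this only says that $h'^2+h^2$ stays bounded --- no contradiction. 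Because $h$ and $h'$ tend to $0$ while $(h'^2+h^2)^{(q-2)/2}$ may blow up, no purely local power count controls the product. What is needed is an integrated quantity: the paper observes that on an interval adjacent to the zero where $h>0$, Eq. \eqref{dMP21} gives $\bigl[(h'^2+h^2)^{q/2}\bigr]'=qh'/h=(q\log h)'$, hence the first integral $(h'^2+h^2)^{q/2}-q\log h=E$; as $\theta\to\theta_0$ the term $-q\log h\to+\infty$ while $(h'^2+h^2)^{q/2}$ stays bounded, a contradiction valid for every $q>0$ at once. You should replace the degenerate-subcase argument by this conservation law (which would in fact subsume all your case distinctions). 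Your closing bootstrap for smoothness is fine, modulo the small point that the right-hand side of $h''=h^{-1}(h'^2+h^2)^{(2-q)/2}-h$ is initially only $C^1$ (it involves $h'$), so the first step gives $C^3$, not $C^4$; the iteration still yields $C^\infty$.
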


\begin{proof}
First note that $h$ is a support function of some convex body in the plane,
$h\not\equiv0$, implying $h$ should be positive at some interval. 

Now assume $h$ has at least one zero point.
Without loss of generality, one can assume that
\begin{equation*}
  h(0)=0 \text{ and } h(\theta)>0 \text{ for } \theta\in(0,\delta), 
\end{equation*}
where $\delta$ is a small positive constant.
We compute in $(0,\delta)$ as following:
\begin{equation*}
  \begin{split}
    \bigl[ (h'^2+h^2)^{\frac{q}{2}} \bigr]'
    &= q (h'^2+h^2)^{\frac{q-2}{2}}(h''+h)h' \\
    &= \frac{qh'}{h} \\
    &=(q\log h)',
  \end{split}
\end{equation*}
where Eq. \eqref{dMP21} is used for the second equality.
Thus, there exists some constant $E$ such that
\begin{equation}\label{eq:61}
  (h'^2+h^2)^{\frac{q}{2}} -q\log h=E
\end{equation}
holds for any $\theta\in(0,\delta)$.
Since $h$ attains its minimum at $\theta=0$, there is $h'(0)=0$. Recalling
$q>0$, we see the equality \eqref{eq:61} cannot hold when $\theta$ is
sufficiently close to $0$.
This contradiction shows that $h$ is positive, and then smooth.
\end{proof}

Obviously, $h\equiv1$ is the unique constant solution to Eq. \eqref{dMP21}.
In the rest of this paper, we consider only non-constant solutions.

Let $h$ be a non-constant solution, which is positive and smooth by Lemma
\ref{lem005}.
Then, equality \eqref{eq:61} holds on the whole $\Sy$.

\begin{lemma}\label{lem006}
  The zeros of $h'$ are isolated.
Moreover, each zero of $h'$ is either a minimum point or a maximum point. 
\end{lemma}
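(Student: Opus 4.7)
The plan is to combine the energy identity \eqref{eq:61}, namely $(h'^2+h^2)^{q/2}-q\log h=E$, with the differential equation \eqref{dMP21} itself, evaluated at critical points.

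First I would observe that at any zero $\theta_0$ of $h'$, equation \eqref{dMP21} (in which $h'^2+h^2$ reduces to $h^2$) gives
\[
h(\theta_0)^{q-1}\bigl(h''(\theta_0)+h(\theta_0)\bigr)=1,\qquad\text{hence}\qquad h''(\theta_0)=h(\theta_0)^{1-q}-h(\theta_0).
\]
So long as $h(\theta_0)\neq 1$, this forces $h''(\theta_0)\neq 0$, and a second-order Taylor expansion then shows that $\theta_0$ is a strict local extremum of $h$ (a minimum when $h(\theta_0)<1$, since then $h''(\theta_0)>0$; a maximum when $h(\theta_0)>1$), and in particular is an isolated zero of $h'$.

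The whole task thus reduces to ruling out the degenerate possibility that some zero $\theta_0$ of $h'$ also has $h(\theta_0)=1$. Suppose it did. Substituting into \eqref{eq:61} yields $E=1$. I would then invoke two elementary inequalities valid for $q>0$ and $h>0$: the pointwise bound $(h'^2+h^2)^{q/2}\geq h^q$ (equality iff $h'=0$), and the bound $h^q-q\log h\geq 1$ (equality iff $h=1$), which follows because $r\mapsto r^q-q\log r$ attains its unique minimum $1$ at $r=1$. Chaining these with $E=1$ gives
\[
1=(h'^2+h^2)^{q/2}-q\log h\geq h^q-q\log h\geq 1
\]
everywhere on $\Sy$, so both inequalities must saturate at every point, forcing $h'\equiv0$ and $h\equiv1$. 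This contradicts the assumption that $h$ is non-constant, so $h(\theta_0)\neq 1$ at every zero of $h'$, and the first paragraph then delivers both conclusions of the lemma.

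No step looks like a serious obstacle; the only subtle ingredient is recognizing that it is precisely the value $E=1$ of the conserved quantity --- itself forced by the hypothetical borderline critical point --- that makes the two elementary inequalities saturate simultaneously and collapse $h$ to the constant solution.
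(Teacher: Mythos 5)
Your proof is correct and takes a genuinely different route from the paper's on both halves. For isolation, the paper argues by contradiction from an accumulation point of zeros of $h'$: at such a point $h''=0$, Eq.~\eqref{dMP21} then forces $h=1$, and uniqueness for the initial value problem forces $h\equiv1$. You instead read off $h''=h^{1-q}-h$ at every critical point directly and dispose of the borderline case $h=1$ via the conserved quantity, chaining $(h'^2+h^2)^{q/2}\ge h^q$ with $t^q-q\log t\ge1$ to collapse $h$ to the constant solution; this is self-contained, avoids a second appeal to ODE uniqueness, and the sign of $h''$ gives you the min/max dichotomy for free. One caveat: what you obtain is that each critical point is a strict \emph{local} extremum, whereas the paper's proof --- and the way the lemma is used immediately afterwards, where the integral defining $\Theta_q$ runs from $h_0=\min h$ to $h_1=\max h$ and one needs $h'\neq0$ strictly between a minimum point and the nearest maximum point --- requires the stronger fact that every critical value equals the global minimum $h_0$ or the global maximum $h_1$. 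This is not a real obstruction for you: since $\phi(t)=t^q-q\log t$ is strictly decreasing on $(0,1)$ and strictly increasing on $(1,\infty)$, the identity \eqref{eq:61} shows each critical value solves $\phi(t)=E$, and there is exactly one such value below $1$ and one above $1$; combined with your sign computation for $h''$ this pins every local minimum value to $h_0$ and every local maximum value to $h_1$. Add that sentence and your argument fully replaces the paper's.
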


\begin{proof}{}
  Assume there exist distinct zeros
  $\set{\theta_0, \theta_1, \cdots, \theta_k, \cdots}$
  of $h'$, such that $\theta_k\to\theta_0$ as $k\to\infty$.
Then, we have by definition that $h''(\theta_0)=0$.
Recalling Eq. \eqref{dMP21}, there is $h(\theta_0)=1$.
But there is only one solution to Eq. \eqref{dMP21} satisfying $h(\theta_0)=1$ and
$h'(\theta_0)=0$, i.e., $h\equiv1$. This contradiction shows that any zero of
$h'$ must be isolated.

For simplicity, write
\begin{equation} \label{eq:62}
h_0=\min_{\theta\in\Sy}h(\theta), \quad h_1=\max_{\theta\in\Sy}h(\theta). 
\end{equation}
Let
\begin{equation*}
\phi(t)=t^q-q\log t, \quad \forall\, t>0.
\end{equation*}
Recalling the equality \eqref{eq:61}, we see
\begin{equation}\label{eq:63}
\phi(h_0)=\phi(h_1)=E.
\end{equation}
Observing that $\phi(t)$ is strictly decreasing in $(0,1)$, and strictly
increasing in $(1,\infty)$, one can see that $E>1$, $h_0<1<h_1$, and there are
no other points making $\phi(\cdot)=E$. 

Now if $\tilde{\theta}$ is a zero of $h'$, then $\phi(h(\tilde{\theta}))=E$ by
\eqref{eq:61}. Hence, $h(\tilde{\theta})$ must equal $h_0$ or $h_1$, implying
$\tilde{\theta}$ is either a minimum point or a maximum point.
\end{proof}

By virtue of Lemma \ref{lem006}, there are only finite minimum and maximum
points for $h$ on $\Sy$.
Fixed a minimum point $\theta_0$, let $\theta_1$ be one of the two nearest
maximum points. Without loss of generality, assume $\theta_0<\theta_1$.
Again by Lemma \ref{lem006}, $h'(\theta)\neq0$, implying $h'(\theta)>0$, when
$\theta\in(\theta_0,\theta_1)$.
Recalling \eqref{eq:61}, we have
\begin{equation*}
  h'(\theta)=\sqrt{\bigl(q\log h(\theta)+E\bigr)^{\frac{2}{q}}-h(\theta)^2},
  \quad \forall \theta\in(\theta_0,\theta_1).
\end{equation*}
Write the inverse function of $h$ in $(\theta_0,\theta_1)$ as
\begin{equation*}
\theta=h^{-1}(u), \quad \forall u\in(h_0,h_1),
\end{equation*}
where $h_0$ and $h_1$ are given as in \eqref{eq:62}.

Now the total curvature of the corresponding curve segment related to $h$ from
$\theta_0$ to $\theta_1$, denoted by $\Theta_q$, can be computed as: 
\begin{equation*}
  \begin{split}
    \Theta_q = \int_{\theta_0}^{\theta_1} \dd \theta 
    &= \int_{h_0}^{h_1}\! \frac{\dd u}{h'(h^{-1}(u))} \\
    &= \int_{h_0}^{h_1}\! \frac{\dd u}{\sqrt{(q\log u +E)^{2/q}-u^2}}.
  \end{split}
\end{equation*}
For further simplification, write $r=\frac{h_1}{h_0}>1$. Then
From \eqref{eq:63}, one has
\begin{equation}\label{eq:64}
  h_0^q=\frac{q\log r}{r^q-1}.
\end{equation}
Using the variable substitution $u=h_0\,x$, and
combining \eqref{eq:63} and \eqref{eq:64}, we obtain
\begin{equation} \label{Theta}
  \Theta_q = \Theta_q(r)
  = \int_1^r \!\frac{\dd x}{\sqrt{\bigl( 1+\frac{r^q-1}{\log r}\log x \bigr)^{\frac{2}{q}} -x^2}}.
\end{equation}
One can easily check that 
\begin{equation} \label{eq:65}
  1+\frac{r^q-1}{\log r}\log x  >x^q, \quad \forall x\in(1,r)
\end{equation}
holds for any $r>1$, and that $\Theta_q(r)$ can be defined for any $r>1$.

\begin{lemma}\label{lem007}
  For each $q>0$, the number of non-constant solutions (up to rotation) to Eq. \eqref{dMP21} is
  equal to the number of elements in the set
\begin{equation}\label{eq:66}
  \set{r>1 : \Theta_q(r)=\frac{\pi}{k} \text{ for some positive integer }k}.
\end{equation}
\end{lemma}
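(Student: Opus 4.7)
The plan is to exhibit an explicit bijection (modulo rotation) between non-constant solutions of \eqref{dMP21} and the parameter set in \eqref{eq:66}. The argument rests on the reflection symmetry of \eqref{dMP21} under $\theta \mapsto -\theta$ combined with the conservation law \eqref{eq:61}.

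Given a non-constant solution $h$, after rotating I may assume $\theta = 0$ is a minimum point, so $h(0) = h_0$ and $h'(0) = 0$. Let $\theta_1 > 0$ be the next zero of $h'$; by Lemma \ref{lem006} it is a maximum with $h(\theta_1) = h_1$, and the computation immediately preceding \eqref{Theta} shows $\theta_1 = \Theta_q(r)$ for $r = h_1/h_0$. Rewrite \eqref{dMP21} as $h'' + h = h^{-1}(h'^2 + h^2)^{(2-q)/2}$; the right-hand side is smooth in a neighborhood of $(h_1, 0)$ because $h_1 > 0$. Since the ODE is invariant under $\theta \mapsto -\theta$, both $s \mapsto h(\theta_1 \pm s)$ solve the same Cauchy problem at $s = 0$, so uniqueness forces $h(\theta_1 + s) = h(\theta_1 - s)$. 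Hence $h$ is reflection-symmetric about every extremum, consecutive extrema are spaced by $\Theta_q(r)$ and alternate between minima and maxima, and the minimal period of $h$ equals $2\Theta_q(r)$. Since $h$ is $2\pi$-periodic, there exists a positive integer $k$ with $2\pi = 2k\,\Theta_q(r)$, i.e.\ $\Theta_q(r) = \pi/k$.

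Conversely, for each $r$ in the set \eqref{eq:66}, I would define $h_0$ via \eqref{eq:64} and solve \eqref{dMP21} with Cauchy data $h(0) = h_0$, $h'(0) = 0$. Local existence and uniqueness hold because the right-hand side of the rewritten ODE is smooth near this state; the conservation law \eqref{eq:61} traps the solution in $[h_0, h_1]$, giving global existence in $\theta$. A direct check using $h_0 < 1$ yields $h''(0) = h_0^{1-q} - h_0 > 0$, so $0$ is a genuine minimum, and the forward analysis then applies: the solution has minimal period $2\Theta_q(r) = 2\pi / k$, hence descends to a non-constant solution on $\Sy$.

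The correspondence $h \mapsto r$ descends to a bijection on rotation classes: the ratio $r = \max h / \min h$ is manifestly rotation-invariant, and if two solutions yield the same $r$ then, after aligning their minima at $\theta = 0$, they share identical Cauchy data and hence coincide. The main obstacle I anticipate is careful bookkeeping of the reflection-symmetry argument at extremum points and of the propagation of periodicity by repeated reflections, but because the solutions of interest remain strictly positive the ODE is non-degenerate at every critical point and standard uniqueness theory applies without complication.
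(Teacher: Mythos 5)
Your proposal is correct and follows essentially the same route as the paper: extract $r=h_1/h_0$ from a non-constant solution, identify the half-period with $\Theta_q(r)$, and reverse the construction by solving the Cauchy problem with $h(0)=h_0$ determined from $r$ via \eqref{eq:64}. The only difference is that you make explicit, via reflection symmetry at extrema, the equal spacing of consecutive critical points and the minimal-period statement that the paper leaves implicit in its total-curvature count.
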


\begin{proof}
For a non-constant solution $h$ to Eq. \eqref{dMP21}, it has total curvature
$2\pi$, implying $\Theta_q(r)$ with $r=\frac{h_1}{h_0}$ must equal
$\frac{\pi}{k}$ for some positive integer $k$.
Different solutions have different minimum values $h_0$, and then have different
$r$-values by \eqref{eq:64}.

On the other hand, for any $r>1$ such that $\Theta_q(r)=\frac{\pi}{k}$ for some
positive integer $k$, let $h_0$ be determined by \eqref{eq:64}.
Note that $0<h_0<1$.
Then, the following second-order ordinary differential equation
\begin{equation*}
  h (h'^2+h^2)^{\frac{q-2}{2}}(h''+h) = 1 \quad \text{ in } \ \R 
\end{equation*}
with initial conditions
\begin{equation*}
h(0)=h_0, \quad h'(0)=0
\end{equation*}
has a unique positive smooth solution $h$, which is obviously non-constant.
Recalling Lemma \ref{lem006}, $h_0<1$, and equality \eqref{eq:63}, we see $h_0$
is the minimum value.
The maximum value of $h$, $h_1$, is also determined uniquely by \eqref{eq:63}.
Simple computations yield $h_1=rh_0$.
Therefore, the period of $h$ is
\begin{equation*}
  2\Theta_q\left(\frac{h_1}{h_0}\right)
  =2\Theta_q(r)
  =\frac{2\pi}{k}, 
\end{equation*}
which shows that $h$ is a solution to Eq. \eqref{dMP21}.
Thus, the conclusion of this lemma is true. 
\end{proof}

By virtue of Lemma \ref{lem007}, Theorems \ref{thm1} and \ref{thm2} are
equivalent to the following lemma.

\begin{lemma}\label{lem008}
  \textup{(a)}
  When $0<q<1$, the set defined in \eqref{eq:66} has only one element.

  \textup{(b)}
  When $1\leq q\leq 4$, the set defined in \eqref{eq:66} is empty.

  \textup{(c)}
  When $q>4$, the set defined in \eqref{eq:66} has at least
  $\lceil\! \sqrt{q} \,\rceil -2$ elements. 
\end{lemma}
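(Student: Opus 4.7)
The plan is to reduce Lemma~\ref{lem008} to determining the range of $\Theta_q(r)$ on $(1,\infty)$ and counting its intersections with the set $\{\pi/k:k\in\mathbb{Z}_{>0}\}$. I would first pin down the two boundary values. Linearizing \eqref{dMP21} around the equilibrium $h\equiv1$ produces $\eta''+q\eta=0$, whose half-period is $\pi/\sqrt{q}$; a direct asymptotic in \eqref{Theta} confirms $\lim_{r\to 1^+}\Theta_q(r)=\pi/\sqrt{q}$. At the other end, the substitution $x=ry$ transforms \eqref{Theta} into $\int_{1/r}^1\!dy/\sqrt{1-y^2+(2/q)\log y/\log r}$, which passes in the limit to $\int_0^1\!dy/\sqrt{1-y^2}=\pi/2$. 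Expanding to order $1/\log r$ (the auxiliary integral $\int_0^1 \log y/(1-y^2)^{3/2}\,dy$ evaluates to $-\pi/2$) shows the limit is approached strictly from above, so $\Theta_q(r)>\pi/2$ for all sufficiently large~$r$.

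For $0<q\le 2$, the monotonicity of $\Theta_q$ in $r$ coming from Andrews' monotonicity result (cited in the introduction) pins the range of $\Theta_q$ as the open interval between $\pi/\sqrt{q}$ and $\pi/2$. When $0<q<1$ this interval contains exactly the single value $\pi=\pi/1$ of the required form (since $\pi/\sqrt{q}>\pi$), giving part~(a); when $1\le q\le 2$ it is contained in $(\pi/2,\pi]$ and the inequality $\sqrt{q}<k<2$ has no integer solution, proving part~(b) in this sub-range. For the remaining $2<q\le 4$ of part~(b), monotonicity in~$r$ breaks down (already at $q=3$), so I would instead show directly that $\pi/2<\Theta_q(r)<\pi$ for every $r>1$. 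The upper bound will follow from a crude estimate; the lower bound is the main content of the numerical estimation in Section~\ref{sec4}, which verifies $\Theta_4(r)>\pi/2$ for every $r>1$, and a comparison argument then transfers the bound to all $q\in(2,4]$.

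For part~(c) I use the opposite sign of the boundary inequality: when $q>4$ one has $\pi/\sqrt{q}<\pi/2$, while the first step of the plan supplies some $r_0$ with $\Theta_q(r_0)>\pi/2$. The intermediate-value theorem applied on $(1,r_0]$ then guarantees that $\Theta_q$ attains every value in $(\pi/\sqrt{q},\Theta_q(r_0)]$, and in particular each $\pi/k$ with integer $k\in[2,\sqrt{q})$. A direct count of those integers gives exactly $\lceil\sqrt{q}\rceil-2$ in both the integer and non-integer cases of $\sqrt{q}$, producing the required distinct $r$-values. The hardest step will be the non-monotone regime $3<q\le 4$ of part~(b): at $q=4$ both boundary limits of $\Theta_q$ coincide with the forbidden value $\pi/2$, so the numerical bound on $\Theta_4(r)-\pi/2$ must be both sharp and uniform in $r$, and the propagation to nearby $q$ must preserve strict positivity.
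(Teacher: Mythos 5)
Your overall strategy coincides with the paper's: compute the two limits $\lim_{r\to1}\Theta_q=\pi/\sqrt q$ and $\lim_{r\to\infty}\Theta_q=\pi/2$, use Andrews-type monotonicity in $r$ for $0<q\le2$, use monotonicity of $\Theta_q(r)$ in $q$ to reduce $1\le q\le4$ to the two-sided bound $\pi/2<\Theta_q(r)<\pi$ (with the lower bound reduced to $\Theta_4(r)>\pi/2$), and use the intermediate value theorem together with the integer count $2\le k<\sqrt q$ for part (c). Your counting in (a), (b) for $q\le2$, and (c) is correct as stated.

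There is, however, a genuine gap in how you propose to establish $\Theta_4(r)>\tfrac{\pi}{2}$ for \emph{every} $r>1$, which is the crux of part (b). A numerical verification cannot by itself cover all $r>1$: the domain is unbounded, the quantity $\Theta_4(r)-\tfrac{\pi}{2}$ tends to $0$ as $r\to1^+$ (since $\pi/\sqrt4=\pi/2$), and even on a compact interval a finite set of sample evaluations says nothing about intermediate $r$ without an additional structural fact. The paper needs three separate ingredients here: (i) an explicit algebraic comparison (bounding the integrand below by $1/\sqrt{(r-x)(r+x-2)}$ once $r^q-qr^{q-1}\log r-1\ge0$) yielding a concrete threshold $\tilde r<8.62$ beyond which $\Theta_4>\pi/2$; (ii) a quantitative Taylor expansion with controlled remainders giving $\Theta_4(r)>\tfrac{\pi}{2}+\tfrac{\pi}{48}\delta^2-\tfrac{\pi}{48}\delta^3-\tfrac{33\pi}{100}\delta^4$ for $r=1+\delta\le1.22$; and (iii) the monotonicity and convexity of $\Theta_4(r)/\log r$, which converts rigorous two-sided bounds at finitely many points into a bound valid on the whole interval $[1.22,8.62]$. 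Your asymptotic expansion at $r\to\infty$ (whose auxiliary integral $-\pi/2$ is indeed correct) only gives positivity for "sufficiently large" $r$ with no explicit cutoff, and you offer no mechanism for the regime $r$ near $1$ or for interpolating between sampled values; without these, the plan for $3<q\le4$ does not close.
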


In the rest of the paper, we will analyze $\Theta_q(r)$ to prove Lemma \ref{lem008}.

%%%%%%%%%%%%%%%%%%%%%%%%%%%%%%%%%%%%%%%%%%%%%
\section{The case $0<q\leq2$}
\label{sec3}

This section deals with the case when $0<q\leq2$ by proving that $\Theta_q(r)$ is
strictly decreasing with respect to $r$ in $(1,+\infty)$.

We first note that $\Theta_q(r)$ with $q>0$ is continuous at every $r\in(1,+\infty)$.
In fact, using the variable substitution $t=\frac{\log x}{\log r}$ for the
integral $\Theta_q(r)$ defined in \eqref{Theta}, we obtain 
\begin{equation} \label{eq:9}
\Theta_q(r) = \int_0^1 \!\frac{r^t\log r\dd t}{\sqrt{\bigl[ 1+(r^q-1)t \bigr]^{\frac{2}{q}} -r^{2t}}}.
\end{equation}
One can easily show that for any closed interval $[a,b]\subset(1,+\infty)$,
there exist positive constants $C$ and $\tilde{C}$ depending only on $a$, $b$
and $q$, such that
\begin{equation} \label{eq:67}
C t(1-t)\leq \bigl[ 1+(r^q-1)t \bigr]^{\frac{2}{q}} -r^{2t} \leq \tilde{C} t(1-t),
\quad \forall\, t\in(0,1) \text{ and } r\in[a,b].
\end{equation}
Then, the continuity of $\Theta_q(r)$ follows from the dominated convergence theorem.

Now we begin to consider the monotonicity of $\Theta_q(r)$ with respect to $r$.
Observing that the integrand in \eqref{eq:9} increases for some $t$ and
decreases for others when $r$ is near $1$, the expression in \eqref{eq:9} is not
suitable for directly deriving the monotonicity.

Inspired by \cite{And.JAMS.16-2003.443}, we consider the following variable substitution
\begin{equation}\label{xt}
x=\left( \bigl(r^{\frac{2q}{3}}-1\bigr)t+1 \right)^{\frac{3}{2q}}, \quad 0<t<1
\end{equation}
for $\Theta_q(r)$ in \eqref{Theta}.
By
\begin{equation*}
  \dd x=\frac{3}{2q}x^{1-\frac{2q}{3}}\bigl(r^{\frac{2q}{3}}-1\bigr)\dd t, 
\end{equation*}
we have that
\begin{equation} \label{eq:42}
  \begin{split}
    \Theta_q(r)
    &=
    \frac{3}{2q} \int_0^1 \!\frac{\bigl(r^{\frac{2q}{3}}-1\bigr)\dd t}{x^{\frac{2q}{3}}
      \sqrt{x^{-2}\bigl( 1+\frac{r^q-1}{\log r}\log x \bigr)^{\frac{2}{q}} -1}} \\
    &=
    \frac{3}{2q} \int_0^1 \!\frac{\dd t}{\sqrt{i_q(r,t)}},
  \end{split}
\end{equation}
where
\begin{equation}\label{eq:52}
  i_q(r,t):=
  \frac{x^{\frac{4q}{3}}}{\bigl(r^{\frac{2q}{3}}-1\bigr)^2}
  \left( x^{-2}\bigl( 1+\frac{r^q-1}{\log r}\log x \bigr)^{\frac{2}{q}} -1 \right).
\end{equation}

% Recall Eq. \eqref{dMP21} with $q=2$ is reduced into Eq. \eqref{LpMP21} with
% $\alpha=1$.

Recall Eq. \eqref{dMP21} with $q=2$ is reduced into the following equation
\begin{equation*}
  h (h''+h) = 1 \quad \text{ on } \ \Sy,
\end{equation*}
which was studied by Andrews in \cite{And.JAMS.16-2003.443}.
There, the fact
\begin{equation} \label{eq:68}
  \pd_r i_2(r,t)>0 \ \text{ for every }r>1 \text{ and }t\in(0,1)
\end{equation}
was proved.
For the reader’s convenience, we here list it as the following lemma.

\begin{lemma}[{\cite[Theorem 5.1]{And.JAMS.16-2003.443}}] \label{lemAnd}
Assume $t\in(0,1)$. Let $v=r^{\frac{4}{3}}t+1-t$, and define
\begin{equation*}
  J(r,t) = \frac{v^2}{\bigl( r^{\frac{4}{3}}-1 \bigr)^2} \left(
    v^{-\frac{3}{2}}\Bigl( 1+\frac{r^2-1}{\frac{4}{3}\log r}\log v \Bigr) -1 \right).
\end{equation*}
Then $\pd_r J(r,t)>0$ for every $r>1$ and $t\in(0,1)$.
\end{lemma}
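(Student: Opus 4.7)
The plan is to establish $\partial_r J(r,t) > 0$ by direct differentiation, after a preliminary change of variables that removes the awkward $4/3$ exponent. Setting $s := r^{4/3}$, one has $(4/3)\log r = \log s$ and $r^2 = s^{3/2}$, which puts $J$ in the form
\[
J = \frac{v^{1/2}\bigl(1+\psi(s)\log v\bigr) - v^2}{(s-1)^2}, \qquad v = 1+(s-1)t, \quad \psi(s) := \frac{s^{3/2}-1}{\log s}.
\]
Since $r \mapsto s$ is a strictly increasing bijection of $(1,\infty)$, the claim reduces to showing $\partial_s J > 0$ for $s > 1$ and $t \in (0,1)$.

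Next, I would exploit the integral representations $\psi(s) = \tfrac{3}{2}\int_0^1 s^{3\sigma/2}\,d\sigma$ and $\log v = (s-1)\int_0^t (1+(s-1)\tau)^{-1}\,d\tau$. Both express the $s$-dependence as an average of elementary functions whose $s$-derivatives are easy to control. Substituting into the numerator of $J$ and clearing the factor $(s-1)^2$ should produce an identity of the form $(s-1)^2 J = G(s,t)$ with $G$ representable as a double integral over $(\sigma,\tau) \in [0,1]\times [0,t]$. Differentiation then converts the claim into a pointwise comparison between two such double integrals, which can potentially be handled integrand-wise after a suitable regrouping.

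The principal obstacle is the cancellation among the three building blocks of $J$: the prefactor $v^2/(s-1)^2$, the factor $v^{-3/2}$, and the perturbation by $\psi(s)\log v$ contribute $s$-derivatives of opposing signs, so bounding each term separately will not succeed. I would close the argument in two steps. First, verify positivity near $s = 1$ via Taylor expansion in $(s-1)$, checking that the leading coefficient is a strictly positive polynomial in $t$ on $(0,1)$; this rules out sign changes of $\partial_s J$ in a right neighbourhood of $s=1$ and also provides the baseline for the $s\to 1^+$ limit. Second, promote this local statement to a global one by reducing the inequality to one variable: fixing $s$ and differentiating in $t$, show that the $t$-extrema of $\partial_s J$ can only occur at the endpoints $t = 0$ or $t = 1$, where $v$ collapses to $1$ or $s$ and the remaining inequality becomes a one-variable statement in $s$ alone, provable by elementary calculus together with a convexity comparison applied to $\psi$ and to the power function $s\mapsto s^{3/2}$.
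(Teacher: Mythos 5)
First, note that the paper does not prove this lemma at all: it is quoted verbatim (after the substitution $v=r^{4/3}t+1-t$) from Andrews' Theorem~5.1, and the authors explicitly remark that they could not adapt Andrews' argument even to nearby exponents. So there is no in-paper proof to compare against, and your attempt must stand on its own. As written it is a plan rather than a proof: the preliminary reductions (the substitution $s=r^{4/3}$, the identities $\psi(s)=\tfrac32\int_0^1 s^{3\sigma/2}\,\dd\sigma$ and $\log v=(s-1)\int_0^t(1+(s-1)\tau)^{-1}\dd\tau$) are correct, but the steps that would actually establish the inequality are only described in the conditional ("should produce", "can potentially be handled"), and the one concrete global mechanism you propose is structurally broken.

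The flaw is this: $J$ vanishes identically at both endpoints in $t$. At $t=0$ one has $v=1$, so the bracket is $1\cdot(1+0)-1=0$; at $t=1$ one has $v=s$ and $\psi(s)\log s=s^{3/2}-1$, so the bracket is $s^{-3/2}\cdot s^{3/2}-1=0$. Hence $\pd_s J(s,0)=\pd_s J(s,1)=0$ for every $s$, and since the claim is that $\pd_s J>0$ on the open interval, the function $t\mapsto\pd_s J(s,t)$ necessarily rises from $0$, stays positive, and returns to $0$ --- it must have an interior critical point, and its minimum over $[0,1]$ is attained exactly at the endpoints where it equals zero. Your proposal to ``show that the $t$-extrema of $\pd_s J$ can only occur at the endpoints'' and then verify ``the remaining inequality'' there therefore cannot close the argument: if both extrema sat only at the endpoints the function would be monotone in $t$ and hence identically zero, and in any case the endpoint values give only the trivial bound $\pd_s J\ge 0$, not strict positivity in the interior. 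The Taylor-expansion step near $s=1$ is also only local and degenerates as $t\to 0,1$ (the leading coefficient tends to $0$), so it cannot be made uniform without further work. What is actually needed --- and what Andrews supplies over several pages --- is a genuinely two-variable argument controlling the cancellation you correctly identify as the principal obstacle; that core is missing here.
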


Note that $i_2(r,t)\equiv J(r,t)$, and then Lemma \ref{lemAnd} is just \eqref{eq:68}.

However, it seems very difficult for the authors to carry out similar techniques
to directly prove the monotonicity of $i_q$ when $q\neq2$.
Fortunately, we have observed an interesting connection between the monotonicity
of the case $0<q<2$ and the case $q=2$.

In order to state this connection, we need to write $r=R^{\frac{3}{2q}}$ to
simplify the above expressions \eqref{xt}-\eqref{eq:52}. 
In fact, we have
\begin{equation} \label{wt}
  x=w^{\frac{3}{2q}} \text{ with } w=Rt+1-t,
\end{equation}
\begin{equation}\label{eq:53}
  I_q(R,t):=
  i_q(R^{\frac{3}{2q}},t) =
  \frac{w^2}{(R-1)^2} \left(
    w^{-\frac{3}{q}}\Bigl( 1+\frac{R^{\frac{3}{2}}-1}{\log R}\log w \Bigr)^{\frac{2}{q}} -1
  \right),
\end{equation}
and
\begin{equation} \label{tTheta}
  \widetilde{\Theta}_q(R) :=
  \Theta_q(R^{\frac{3}{2q}}) =
  \frac{3}{2q} \int_0^1 \!\frac{\dd t}{\sqrt{I_q(R,t)}}.
\end{equation}
Note that $I_q$ is much simpler than $i_q$ as expressions of $q$.

\begin{lemma}\label{lem003}
  For any fixed $t\in(0,1)$, if
\begin{equation*}
\pdR I_q(R,t)>0, \quad \forall R>1
\end{equation*}
holds for $q=2$, it also holds for $0<q<2$.
\end{lemma}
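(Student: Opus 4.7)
The plan is to localize all $q$-dependence of $I_q(R,t)$ into a single exponent, and then transfer the monotonicity from $q=2$ to $0<q<2$ by a short convexity argument. I would introduce
\[
u(R,t) := w^{-3/2}\Bigl(1 + \tfrac{R^{3/2}-1}{\log R}\log w\Bigr), \qquad Q(R,t) := \frac{w^2}{(R-1)^2},
\]
so that $I_q$ takes the factored form $I_q(R,t) = Q(R,t)\,\bigl(u^{2/q} - 1\bigr)$. The inequality \eqref{eq:65}, translated via $x = w^{3/(2q)}$, reads exactly $u > 1$ for all $R>1$ and $t \in (0,1)$. A direct computation using $\pd_R w = t$ and the algebraic identity $t(R-1) - w = -1$ produces
\[
\pdR Q \;=\; \frac{2w\bigl[t(R-1) - w\bigr]}{(R-1)^3} \;=\; \frac{-2w}{(R-1)^3} \;<\; 0.
\]

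Differentiating the factored expression gives
\[
\pdR I_q \;=\; (\pdR Q)\bigl(u^{2/q} - 1\bigr) + \frac{2Q}{q}\, u^{2/q - 1}\,\pdR u.
\]
Specializing to $q = 2$, the hypothesis $\pdR I_2 = (\pdR Q)(u-1) + Q\,\pdR u > 0$, together with $\pdR Q < 0$ and $u > 1$, forces the crucial intermediate bound
\[
Q\,\pdR u \;>\; -(\pdR Q)(u-1) \;>\; 0.
\]
In particular $\pdR u > 0$, a fact that is not obvious a priori but is extracted for free from the $q=2$ monotonicity because $\pdR Q$ has the right sign.

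The final ingredient is the elementary inequality
\[
\alpha\, u^{\alpha-1}(u-1) \;\ge\; u^\alpha - 1, \qquad u \ge 1, \ \alpha \ge 1,
\]
which follows since both sides vanish at $u=1$ and their difference has nonnegative derivative $\alpha(\alpha-1)u^{\alpha-2}(u-1)$. Applying this with $\alpha = 2/q \ge 1$, and multiplying the intermediate bound above by the positive factor $\tfrac{2}{q}\,u^{2/q - 1}$, yields
\[
\frac{2Q}{q}\, u^{2/q - 1}\,\pdR u \;>\; \tfrac{2}{q}\, u^{2/q-1}\,(-\pdR Q)(u-1) \;\ge\; (-\pdR Q)\bigl(u^{2/q} - 1\bigr),
\]
which rearranges to $\pdR I_q > 0$, as required. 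I do not foresee a genuine obstacle; the argument reduces to the factorization $I_q = Q\,(u^{2/q}-1)$, the sign $\pdR Q < 0$, and one elementary convexity inequality. The only point that takes a moment of thought is the extraction of $\pdR u > 0$ from the $q=2$ hypothesis, which is precisely where the sign of $\pdR Q$ earns its keep.
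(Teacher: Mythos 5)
Your proof is correct and follows essentially the same route as the paper: the same factorization $I_q=Q\,(u^{2/q}-1)$ with $u=F_2>1$ and $\pdR Q<0$, the same extraction of $Q\,\pdR u>-(\pdR Q)(u-1)$ from the $q=2$ hypothesis, and a final elementary inequality $\tfrac{2}{q}u^{2/q-1}(u-1)\ge u^{2/q}-1$ that is algebraically equivalent to the paper's positivity of $\phi(s)=s^{1-2/q}+(2/q-1)s-2/q$ at $s=F_2$. No gaps.
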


\begin{proof}
For simplicity, write
\begin{equation} \label{eq:54}
  F_q(R):=
  w^{-\frac{3}{q}}\Bigl( 1+\frac{R^{\frac{3}{2}}-1}{\log R}\log w \Bigr)^{\frac{2}{q}}.
\end{equation}
Note that $F_q(R)>1$ for any $R>1$, and $F_q(R)^q$ is independent of $q$, implying
\begin{equation*}
F_q(R)=F_2(R)^{\frac{2}{q}}, \quad \forall R>1.
\end{equation*}
We also write
\begin{equation}\label{eq:55}
G(R):=\frac{w^2}{(R-1)^2}=\left( t+\frac{1}{R-1} \right)^2.
\end{equation}
Then $G'(R)<0$ for every $R>1$.
Here and in the following proof of this lemma, the superscript $'$ denotes
$\frac{\dd}{\dd R}$, unless otherwise explicitly stated.

With \eqref{eq:54} and \eqref{eq:55}, the $I_q(R,t)$ defined in \eqref{eq:53}
reads
\begin{equation*}
  \begin{split}
    I_q(R,t)
    &=G(R)[F_q(R)-1] \\
    &=G(R)[F_2(R)^{\frac{2}{q}}-1].
  \end{split}
\end{equation*}
Therefore, 
\begin{equation}\label{eq:56}
  \pdR I_q = \bigl(F_2^{\frac{2}{q}}-1\bigr)G' +\frac{2}{q}F_2^{\frac{2}{q}-1}F_2'G.
\end{equation}
In particular,
\begin{equation*}
  \pdR I_2 = (F_2-1)G' +F_2'G,
\end{equation*}
namely $F_2'G = \pdR I_2 - (F_2-1)G'$. Inserting it into \eqref{eq:56}, we
obtain
\begin{equation}\label{eq:57}
  \begin{split}
    F_2^{1-\frac{2}{q}} \pdR I_q
    &=
    \frac{2}{q}\pdR I_2  
    +\bigl(F_2-F_2^{1-\frac{2}{q}}\bigr)G'
    -\frac{2}{q} (F_2-1)G' \\
    &=
    \frac{2}{q}\pdR I_2  
    -\left(
      F_2^{1-\frac{2}{q}} +\Bigl(\frac{2}{q}-1\Bigr)F_2 -\frac{2}{q}
    \right)G'.
  \end{split}
\end{equation}

Let
\begin{equation*}
  \phi(s)= s^{1-\frac{2}{q}} +\Bigl(\frac{2}{q}-1\Bigr)s -\frac{2}{q},
  \quad \forall s>0,
\end{equation*}
then
\begin{align*}
  \phi'(s)&= -\Bigl( \frac{2}{q}-1 \Bigr)s^{-\frac{2}{q}} +\frac{2}{q}-1, \\
  \phi''(s)&= \frac{2}{q} \Bigl( \frac{2}{q}-1 \Bigr) s^{-\frac{2}{q}-1}.
\end{align*}
When $0<q<2$, $\phi''(s)>0$. By virtue of $\phi'(1)=0$, we see that
\begin{equation*}
\phi(s)>\phi(1)=0, \quad \forall\, 0<s<1 \text{ or } s>1.
\end{equation*}
Since $F_2>1$ for any $R>1$, then $\phi(F_2)>0$, namely
\begin{equation}\label{eq:58}
  F_2^{1-\frac{2}{q}} +\Bigl(\frac{2}{q}-1\Bigr)F_2 -\frac{2}{q} >0, \quad \forall R>1. 
\end{equation}
Combining this inequality \eqref{eq:58} and the fact $G'<0$, we see from
\eqref{eq:57} that
\begin{equation} \label{eq:59}
  F_2^{1-\frac{2}{q}} \pdR I_q > \frac{2}{q}\pdR I_2.
\end{equation}
Hence, if $\pdR I_2>0$ for every $R>1$, then $\pdR I_q>0$ for every $R>1$ and
$0<q<2$. 
This means the lemma is true.
\end{proof}

Obviously, by \eqref{eq:53}, \eqref{eq:68} is equivalent to
\begin{equation}\label{eq:60}
  \pdR I_2(R,t)>0, \quad \forall R>1 \text{ and } t\in(0,1),
\end{equation}
which together with Lemma \ref{lem003} implies the following lemma.

\begin{lemma}\label{lem004}
  For every $0<q\leq2$, there is
\begin{equation*}
\pdR I_q(R,t)>0, \quad \forall R>1 \text{ and } t\in(0,1).
\end{equation*}
As a result, \eqref{tTheta} implies that $\widetilde{\Theta}_q(R)$ is strictly
decreasing in $R\in(1,+\infty)$, i.e., $\Theta_q(r)$ is strictly
decreasing in $r\in(1,+\infty)$. 
\end{lemma}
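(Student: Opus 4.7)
The plan is to assemble the pieces already in place. For the pointwise statement $\pdR I_q(R,t)>0$, I would first record the case $q=2$: since $i_2(r,t)\equiv J(r,t)$ by direct comparison of \eqref{eq:52} with the definition in Lemma \ref{lemAnd}, Andrews' result yields $\pd_r i_2(r,t)>0$ for all $r>1$ and $t\in(0,1)$. Under the smooth, strictly increasing substitution $r=R^{3/(2q)}$ of $(1,+\infty)$ onto itself, the chain rule converts this into $\pdR I_2(R,t)>0$, which is exactly \eqref{eq:60}. Feeding that hypothesis into Lemma \ref{lem003} propagates the positivity to every $0<q<2$, and combining with the $q=2$ case covers the full range $0<q\leq 2$.

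For the integral consequence, the positivity of $\pdR I_q$ together with $I_q(R,t)>0$ means that for each fixed $t\in(0,1)$ the function $R\mapsto I_q(R,t)^{-1/2}$ is strictly decreasing in $R$. Hence by \eqref{tTheta}, for any $1<R_1<R_2$,
\begin{equation*}
  \widetilde{\Theta}_q(R_2)-\widetilde{\Theta}_q(R_1)
  =\frac{3}{2q}\int_0^1\!\left[\frac{1}{\sqrt{I_q(R_2,t)}}-\frac{1}{\sqrt{I_q(R_1,t)}}\right]\!\dd t,
\end{equation*}
whose integrand is strictly negative for every $t\in(0,1)$. Finiteness of both integrals follows from the two-sided bound \eqref{eq:67}, which dominates $1/\sqrt{I_q(R,t)}$ by a fixed integrable function of $t$ on any compact subinterval of $(1,+\infty)$, so the difference is strictly negative. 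Consequently $\widetilde{\Theta}_q$ is strictly decreasing on $(1,+\infty)$, and the bijection $R\mapsto r=R^{3/(2q)}$ of $(1,+\infty)$ onto itself transfers this to strict decrease of $\Theta_q(r)$ in $r\in(1,+\infty)$.

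Since the nontrivial analytic work has been isolated in Andrews' Lemma \ref{lemAnd} and in the comparison argument of Lemma \ref{lem003}, the present lemma presents no substantive obstacle; it is a pure assembly step. The only minor point demanding care is the passage from pointwise positivity of $\pdR I_q$ to strict monotonicity of the integral, and this is handled by the explicit difference formula above together with \eqref{eq:67}, avoiding any appeal to differentiation under the integral sign.
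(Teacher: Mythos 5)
Your proposal is correct and takes essentially the same (very terse) route as the paper: Andrews' result gives \eqref{eq:60} via the increasing substitution $r=R^{3/(2q)}$, Lemma \ref{lem003} propagates the positivity to $0<q<2$, and pointwise strict monotonicity of $I_q(\cdot,t)$ passes to the integral in \eqref{tTheta}. The only nitpick is that \eqref{eq:67} pertains to the parametrization \eqref{eq:9} rather than to \eqref{tTheta}, but finiteness of $\widetilde{\Theta}_q(R)=\Theta_q(R^{3/(2q)})$ is already guaranteed by the remark following \eqref{Theta}, so your difference formula is justified.
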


Now to find out all values of $\Theta_q(r)$ for $0<q\leq2$, it only needs to
compute the values when $r$ approaches one or infinity.

\begin{lemma}\label{lem009}
  When $q>0$, we have
\begin{equation*}
  \lim_{r\to1} \Theta_q(r) =\frac{\pi}{\sqrt{q}}
  \quad \text{ and } \quad
  \lim_{r\to\infty} \Theta_q(r) =\frac{\pi}{2}. 
\end{equation*}
\end{lemma}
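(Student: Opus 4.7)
The plan is to evaluate both limits by re-expressing $\Theta_q(r)$ as an integral over a fixed domain and then passing to the limit under the integral sign.

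For $r\to 1^{+}$, I would work with the form \eqref{eq:9} and Taylor-expand in $s:=\log r\to 0^{+}$. Using $r^q-1 = qs(1+qs/2+O(s^2))$ and expanding $(1+(r^q-1)t)^{2/q}$ to second order in $s$, one finds that the zeroth- and first-order terms cancel with those of $r^{2t}=e^{2st}$, so that
\begin{equation*}
[1+(r^q-1)t]^{2/q}-r^{2t} = q\,s^{2}\,t(1-t)+O(s^{3}),
\end{equation*}
while the numerator $r^t\log r = s(1+O(s))$. Hence the integrand converges pointwise on $(0,1)$ to $1/\sqrt{q\,t(1-t)}$. Writing the denominator as $t(1-t)E_q(r,t)$, the factor $E_q$ admits a continuous extension to $[0,1]\times[1,1+\delta]$ with $E_q(r,t)/s^{2}\to q$ uniformly in $t$ (by compactness plus the pointwise expansion above), giving $E_q(r,t)\geq (q/2)s^{2}$ for $r$ close to $1$. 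The integrand is then dominated by $C/\sqrt{t(1-t)}\in L^{1}(0,1)$, and dominated convergence yields $\int_{0}^{1}dt/\sqrt{q\,t(1-t)}=\pi/\sqrt{q}$.

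For $r\to\infty$, I would use the scaling $y=x/r$ in \eqref{Theta}, which (using $\log x=\log r+\log y$ and factoring $r^q$ out of the base) gives
\begin{equation*}
\Theta_q(r)=\int_{1/r}^{1}\frac{dy}{\sqrt{G(y,\alpha)}},\quad G(y,\alpha):=(1+\alpha\log y)^{2/q}-y^{2},\quad \alpha=\alpha(r):=\frac{r^{q}-1}{r^{q}\log r}\to 0.
\end{equation*}
Pointwise on $(0,1)$ the integrand tends to $1/\sqrt{1-y^{2}}$. The delicate point is that $G$ vanishes not only at $y=1$ but also at the moving endpoint $y=1/r$ (one checks $1+\alpha\log(1/r)=1/r^q$), so no single dominating function on $(0,1)$ controls the whole family. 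I would therefore split at an arbitrary $\epsilon>0$. On $[\epsilon,1]$ the linearisation $\partial_{y}G(1,\alpha)=2(\alpha/q-1)\to -2$ gives $G(y,\alpha)\geq\tfrac{1}{2}(1-y^{2})$ for $r$ large, and dominated convergence yields $\int_{\epsilon}^{1}dy/\sqrt{G}\to\int_{\epsilon}^{1}dy/\sqrt{1-y^{2}}$. On the shrinking strip $(1/r,\epsilon)$, the linearisation of $G$ at the moving endpoint shows $\partial_{y}G(1/r,\alpha)\sim 2r^{q-1}/(q\log r)$, from which one can derive an explicit bound
\begin{equation*}
\int_{1/r}^{\epsilon}\frac{dy}{\sqrt{G(y,\alpha)}}\leq \int_{0}^{\epsilon}\frac{dy}{\sqrt{1-y^{2}}}+o(1)\text{ as }r\to\infty.
\end{equation*}
Letting $\epsilon\to 0$ at the end yields the desired value $\pi/2$.

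The principal obstacle will be the uniform control in the $r\to\infty$ case: because $G$ has a moving zero at $y=1/r$, the natural candidate bound $1/\sqrt{1-y^{2}}$ fails near $y=0$, so the split-integral strategy together with a separate estimate on the shrinking strip is essential. The $r\to 1$ case is more routine, requiring only that the quadratic-in-$s$ cancellation survive uniformly in $t\in(0,1)$ down to the endpoints, which is what the factorisation $t(1-t)E_q(r,t)$ is designed to furnish.
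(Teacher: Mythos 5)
Your $r\to1^{+}$ argument is correct and is essentially the paper's own: the same quadratic cancellation $[1+(r^q-1)t]^{2/q}-r^{2t}=q\,t(1-t)(r-1)^2\bigl(1+o(1)\bigr)$ uniformly in $t\in(0,1)$ (your $s=\log r$ versus the paper's $\delta=r-1$ is immaterial), followed by dominated convergence against $C/\sqrt{t(1-t)}$. For $r\to\infty$ your overall strategy also matches the paper's in substance: rescale so the domain is $(1/r,1]$, apply dominated convergence away from the moving zero of the denominator, and estimate the shrinking strip separately; the paper's cuts at $x=r^{a}$ and $x=br$ are exactly $y=r^{a-1}$ and $y=b$ in your variable.

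The gap is in the strip estimate. You propose to control $\int_{1/r}^{\epsilon}dy/\sqrt{G(y,\alpha)}$ from the linearisation of $G$ at the moving endpoint $y=1/r$, but that linearisation cannot give a lower bound for $G$ on the whole strip. Indeed
\begin{equation*}
  \pd_y G(y,\alpha)=\frac{2\alpha}{qy}\bigl(1+\alpha\log y\bigr)^{\frac{2}{q}-1}-2y
  \longrightarrow -2y<0
\end{equation*}
for each fixed $y\in(0,\epsilon)$ as $r\to\infty$ (since $\alpha\to0$), so $G$ rises very steeply from $0$ at $y=1/r$ and then \emph{decreases} toward $1-y^{2}$; a bound of the form $G(y)\geq m\,(y-1/r)$ with $m\sim 2r^{q-1}/(q\log r)$ fails as soon as $y$ is of order $\epsilon$, where the right-hand side blows up while $G\leq1$. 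The strip therefore needs a further subdivision, which is precisely what the paper does: on $y\in(1/r,r^{a-1}]$ (i.e.\ $x\in(1,r^{a}]$, $a=\tfrac12\min\{q,1\}$) the steep-slope estimate applies and yields a contribution $O(\sqrt{\log r}\;r^{-a/2})\to0$, while on the middle range $y\in[r^{a-1},\epsilon]$ one instead shows $G$ is bounded below by a positive constant (the paper gets $a^{2/q}-b^{2}=b^{2}>0$) and applies dominated convergence there, obtaining the exact limit $\int_0^{b}d\tau/\sqrt{1-\tau^{2}}$ rather than only an upper bound. With that extra cut your claimed inequality on the strip is true and your $\epsilon\to0$ squeeze closes the argument; as written, the step ``from which one can derive an explicit bound'' does not go through.
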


\begin{proof}{}
The proof involves only basic calculations.
However, one should be careful when taking a limit inside an integral.
Here we provide the main steps.

(a) The case when $r\to1$.
Using the Taylor expansion, one can easily check that
\begin{equation*}
  \bigl[ 1+(r^q-1)t \bigr]^{\frac{2}{q}} -r^{2t}
  =qt(1-t)(r-1)^2[1+\alpha(r,t)],
\end{equation*}
where $\alpha(r,t)$ converges to zero uniformly for $t\in(0,1)$ when $r\to1$.
Applying the dominated convergence theorem to \eqref{eq:9}, we have
\begin{equation*}
\lim_{r\to1}\Theta_q(r) = \int_0^1 \!\frac{\dd t}{\sqrt{qt(1-t)}} =\frac{\pi}{\sqrt{q}}.
\end{equation*}

(b) The case when $r\to\infty$.
We need more efforts to deal with this case. 
For simplicity, let
\begin{equation*}
a=\frac{1}{2}\min\set{q,1}\in(0,\tfrac{1}{2}]
\ \text{ and } \ 
b=\frac{1}{\sqrt{2}}a^{\frac{1}{q}}. 
\end{equation*}
We assume $r>2b^{\frac{1}{a-1}}$, and write $\Theta_q$ in \eqref{Theta} into three parts:
\begin{equation} \label{eq:16}
  \begin{split}
    \Theta_q(r)
    &=\biggl( \int_{1}^{r^a} +\int_{r^a}^{br} +\int_{br}^r \biggr)
    \!\frac{\dd x}{\sqrt{\bigl( 1+\frac{r^q-1}{\log r}\log x \bigr)^{\frac{2}{q}} -x^2}} \\
    &=: I(r) +\II(r) +\III(r).
  \end{split}
\end{equation}

For $I$, let
\begin{equation*}
  \phi(x)=\Bigl( 1+\frac{r^q-1}{\log r}\log x \Bigr)^{\frac{2}{q}} -x^2,
  \quad \forall x\in(1,r^a).
\end{equation*}
With inequality \eqref{eq:65}, we have
\begin{equation*}
  \begin{split}
    \phi'(x)
    &= \frac{2}{q} \Bigl( 1+\frac{r^q-1}{\log r}\log x \Bigr)^{\frac{2}{q}-1} \frac{r^q-1}{x\log r} -2x \\
    &\ge \frac{2(r^q-1)}{q\log r} \Bigl( 1+\frac{r^q-1}{\log r}\log x \Bigr)^{\frac{1}{q}-1} -2x \\
    &\ge \begin{cases}
      \frac{r^q}{q\log r}-2r^a, & \text{if } q\le1 \\
      \frac{r^q}{q\log r} (r^q)^{\frac{1}{q}-1}-2r^a, & \text{if }q>1
    \end{cases} \\
    &\ge \frac{r^{2a}}{2q\log r},
  \end{split}
\end{equation*}
where the last inequality holds when $r$ is sufficiently large. Hence, 
\begin{equation}\label{eq:18}
  \begin{split}
    I(r)
    &=\int_1^{r^a} \!\frac{\dd x}{\sqrt{\phi'(\xi)(x-1)}} \\
    &\le \sqrt{\frac{2q\log r}{r^{2a}}} \int_1^{r^a} \!\frac{\dd x}{\sqrt{x-1}} \\
    &= \sqrt{\frac{2q\log r}{r^{2a}}} \cdot 2\sqrt{r^a-1} \\
    &\to 0 \text{ as } r\to\infty.
  \end{split}
\end{equation}

For $\II$, using the variable substitution $t=rx^{-1}$, we have
\begin{equation} \label{eq:17}
  \II(r)
  % = \int_{\frac{1}{b}}^{r^{1-a}} \!\frac{rt^{-2}\dd t}{
  % \sqrt{\bigl[ 1+\frac{r^q-1}{\log r}(\log r-\log t) \bigr]^{\frac{2}{q}} -r^2t^{-2}}} \\
  = \int_{\frac{1}{b}}^{r^{1-a}} \!\frac{t^{-2}\dd t}{
    \sqrt{\bigl( 1-\frac{1-r^{-q}}{\log r}\log t \bigr)^{\frac{2}{q}} -t^{-2}}}.
\end{equation}
Since for every $t\in[\tfrac{1}{b},r^{1-a}]$, there is
\begin{equation*}
  1-\frac{1-r^{-q}}{\log r}\log t
  \geq 1-(1-r^{-q})(1-a) >a,
\end{equation*}
implying
\begin{equation*}
  \Bigl( 1-\frac{1-r^{-q}}{\log r}\log t \Bigr)^{\frac{2}{q}} -t^{-2}
  > a^{\frac{2}{q}} -b^2 =b^2.
\end{equation*}
Then, we see that
\begin{equation*}
  \II(r)
  \leq \int_{\frac{1}{b}}^{r^{1-a}} \!\frac{1}{b}t^{-2}\dd t
  \leq \int_{\frac{1}{b}}^{\infty} \!\frac{1}{b}t^{-2}\dd t.
\end{equation*}
Now one can apply the dominated convergence theorem to \eqref{eq:17} to obtain
\begin{equation} \label{eq:19}
  \lim_{r\to\infty} \II(r)
  = \int_{\frac{1}{b}}^{\infty} \!\frac{t^{-2}\dd t}{\sqrt{1 -t^{-2}}}
  = \int_{0}^{b} \!\frac{\dd \tau}{\sqrt{1 -\tau^2}}.
\end{equation}

For $\III$, using the variable substitution $t=r^{-1}x$, we have
\begin{equation} \label{eq:20}
  \III(r)
  % = \int_{b}^{1} \!\frac{r\dd t}{
  % \sqrt{\bigl[ 1+\frac{r^q-1}{\log r}(\log r+\log t) \bigr]^{\frac{2}{q}} -r^2t^2}} 
  = \int_{b}^{1} \!\frac{\dd t}{
    \sqrt{\bigl( 1+\frac{1-r^{-q}}{\log r}\log t \bigr)^{\frac{2}{q}} -t^2}}.
\end{equation}
Assume $r>b^{-2}$, then 
\begin{equation*}
  1+\frac{1-r^{-q}}{\log r}\log t \in \Bigl(\frac{1}{2},1\Bigr),
  \quad \forall\, t\in[b,1). 
\end{equation*}
Let $\phi(t)=\bigl( 1+\frac{1-r^{-q}}{\log r}\log t \bigr)^{\frac{1}{q}} -t$.
Then $\phi(1)=0$ and
\begin{equation*}
  \begin{split}
    -\phi'(t)
    &= -\frac{1}{q}\Bigl( 1+\frac{1-r^{-q}}{\log r}\log t \Bigr)^{\frac{1}{q}-1}
    \cdot \frac{1-r^{-q}}{t\log r}  +1 \\
    &\ge -\frac{C_q}{\log r} +1 \\
    &\ge \frac{1}{4} \text{ when $r$ is very large.}
  \end{split}
\end{equation*}
Therefore, $\III$ can be estimated as following:
\begin{equation*}
  \begin{split}
    \III(r)
    &\le \int_{b}^{1} \!\frac{\dd t}{\sqrt{t \phi(t)}} \\
    &= \int_{b}^{1} \!\frac{\dd t}{\sqrt{t [-\phi'(\xi)](1-t)}} \\
    &\le \int_{b}^{1} \!\frac{2\dd t}{\sqrt{b (1-t)}},
  \end{split}
\end{equation*}
which means that the dominated convergence theorem can be applied to
\eqref{eq:20}.
Thus, we obtain
\begin{equation} \label{eq:21}
  \lim_{r\to\infty}\III(r) = \int_{b}^{1} \!\frac{\dd t}{\sqrt{1 -t^2}}.
\end{equation}

Now combining \eqref{eq:16}, \eqref{eq:18}, \eqref{eq:19} and \eqref{eq:21}, we obtain
\begin{equation*} 
  \lim_{r\to\infty}\Theta_q(r)
  = \int_{0}^{1} \!\frac{\dd t}{\sqrt{1 -t^2}}
  =\frac{\pi}{2}.
\end{equation*}
The proof of this lemma is completed.
\end{proof}

On account of Lemmas \ref{lem004} and \ref{lem009}, we see Lemma \ref{lem008}
holds for the case when $0<q\leq2$.

%%% the variable substitution \eqref{xt} no longer works for $q=\frac{5}{2}$

%%%%%%%%%%%%%%%%%%%%%%%%%%%%%%%%%%%%%%%%%%%%%
\section{The case $1\leq q\leq4$}
\label{sec4}

In this section, we mainly deal with the case when $1\leq q\leq4$, as well as the
case when $q>4$.

Note that when $q>3$, $\Theta_q(r)$ is no longer monotonic with respect to $r$
in $(1,+\infty)$. 
In fact, by virtue of Lemmas \ref{lem009} and \ref{lem010}, there exists a
sufficiently large $r$ such that $\frac{\dd}{\dd r}\Theta_q(r)<0$.
One can also check that $\frac{\dd}{\dd r}\Theta_q(r)>0$ for some $r$ near $1$.

Due to lack of monotonicity, we need to develop some new methods to find out the
exact number of elements in the set \eqref{eq:66}. 
By combining the theoretical analysis and numerical estimation, we will prove
for $1\leq q\leq4$ that 
\begin{equation} \label{eq:10}
  \frac{\pi}{2}<\Theta_q(r)<\pi
  \ \text{ for every } r>1.
\end{equation}

We begin the proof with the following observation.

\begin{lemma} \label{lem01}
  For any $r>1$ and $0<q_1<q_2$, there is
  \begin{equation*}
    \Theta_{q_2}(r)<\Theta_{q_1}(r).
  \end{equation*}
\end{lemma}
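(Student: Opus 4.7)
The plan is to exploit the representation \eqref{eq:9},
\begin{equation*}
  \Theta_q(r) = \int_0^1 \!\frac{r^t \log r\dd t}{\sqrt{\bigl[ 1+(r^q-1)t \bigr]^{\frac{2}{q}} -r^{2t}}},
\end{equation*}
in which the numerator $r^t\log r$ and the limits of integration carry no $q$. It therefore suffices to prove, for each fixed $r>1$ and $t\in(0,1)$, that the radicand is strictly increasing in $q$. Since $r^{2t}$ is also $q$-independent, the task reduces to showing that $q\mapsto\bigl[1+(r^q-1)t\bigr]^{2/q}$ is strictly increasing on $(0,\infty)$, or equivalently, that
\begin{equation*}
  g(q):=\frac{1}{q}\log\bigl(1+(r^q-1)t\bigr)
\end{equation*}
is strictly increasing there.

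Writing $A:=1+(r^q-1)t$, a direct differentiation in $q$ yields
\begin{equation*}
  q^2 A\, g'(q) = q t r^q \log r - A\log A,
\end{equation*}
so the question collapses to the pointwise inequality
\begin{equation*}
  f(t):= q t r^q \log r - A(t)\log A(t) > 0 \qquad \text{for all } t\in(0,1),
\end{equation*}
with $r>1$ and $q>0$ fixed. I would attack this by regarding $f$ as a function of $t$ alone on $[0,1]$. One verifies at once that $f(0)=0$ (both terms vanish when $A(0)=1$) and $f(1)=0$ (since $A(1)=r^q$, so $A\log A=q r^q \log r$). Because $A(t)$ is linear in $t$, one has $(A\log A)''=(A')^2/A=(r^q-1)^2/A$, and therefore
\begin{equation*}
  f''(t) = -\frac{(r^q-1)^2}{A(t)} < 0.
\end{equation*}
Thus $f$ is strictly concave on $[0,1]$ with both endpoint values zero, which forces $f>0$ on $(0,1)$.

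Chaining these reductions backward gives the result: the radicand is strictly increasing in $q$ for every admissible $(r,t)$, hence the integrand of $\Theta_q(r)$ is strictly decreasing in $q$, and integrating delivers $\Theta_{q_2}(r)<\Theta_{q_1}(r)$ whenever $0<q_1<q_2$. I do not foresee a real obstacle; the whole argument hinges on the identity $(A\log A)''=(A')^2/A$ for linear $A$, after which the concavity-plus-boundary-values trick finishes the job. The only conceptual choice is picking the representation: differentiating the original form \eqref{Theta} in $q$ is awkward because the endpoint $r$ hides $q$-dependence through $\log r$ being held fixed while $r^q$ varies, whereas \eqref{eq:9} places the entire $q$-dependence inside the radical and makes the calculation clean.
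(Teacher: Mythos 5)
Your proof is correct and follows essentially the same route as the paper: both arguments reduce the claim to showing that $q\mapsto\bigl[1+(r^q-1)a\bigr]^{2/q}$ is strictly increasing for fixed $a\in(0,1)$, which in turn is exactly the scalar inequality $qar^q\log r>A\log A$ with $A=1+(r^q-1)a$. The only (minor) difference is how that last inequality is verified — you use strict concavity of $a\mapsto qar^q\log r-A\log A$ together with its vanishing at $a=0,1$, while the paper shows the auxiliary function $(a+b)\log(1+b/a)$ is decreasing in $a$; both are equally elementary.
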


\begin{proof}{}
Given any $r>1$, consider
\begin{equation*}
\phi(q)=\bigl[ 1+(r^q-1)a \bigr]^{\frac{2}{q}}, \quad 0<a<1,
\end{equation*}
then we have
\begin{equation}\label{eq:12}
  \frac{\phi'(q)}{\phi(q)}
  =\frac{2}{q} \left(\frac{a r^q \log r}{1+(r^q-1)a} -\frac{\log[ 1+(r^q-1)a ]}{q}\right).
\end{equation}
To prove $\phi'(q)>0$, we introduce the auxiliary function
\begin{equation*}
f(a)=(a+b)\log\left(1+\frac{b}{a}\right), \quad a>0, \  b>0.
\end{equation*}
Since
\begin{equation*}
f'(a)=\log\left(1+\frac{b}{a}\right)-\frac{b}{a}<0,
\end{equation*}
there is $f(a)>f(1)$ for $0<a<1$, namely,
\begin{equation} \label{eq:11}
  (a+b)\log\left(1+\frac{b}{a}\right)
  >
  (1+b)\log\left(1+b\right),
  \quad
  \forall 0<a<1,\ b>0.
\end{equation}
Replacing $b$ by $(r^q-1)a$ in the inequality \eqref{eq:11}, we obtain
\begin{equation*} 
  ar^q\log r^q
  >
  [1+(r^q-1)a]\log[1+(r^q-1)a],
  \quad
  \forall 0<a<1,
\end{equation*}
namely
\begin{equation*} 
  \frac{ar^q\log r}{1+(r^q-1)a}
  >
  \frac{\log[1+(r^q-1)a]}{q},
  \quad
  \forall 0<a<1.
\end{equation*}
Recalling \eqref{eq:12}, we see that $\phi'(q)>0$.
Therefore,
\begin{equation*}
\bigl[ 1+(r^{q_1}-1)a \bigr]^{\frac{2}{q_1}}
<
\bigl[ 1+(r^{q_2}-1)a \bigr]^{\frac{2}{q_2}}, \quad \forall 0<a<1.
\end{equation*}
For any $1<x<r$, replacing $a$ by $\frac{\log x}{\log r}$, we have
\begin{equation*}
\Bigl( 1+\frac{r^{q_1}-1}{\log r}\log x \Bigr)^{\frac{2}{q_1}}
<
\Bigl( 1+\frac{r^{q_2}-1}{\log r}\log x \Bigr)^{\frac{2}{q_2}},
\end{equation*}
which together with the definition of $\Theta_q(r)$ given in \eqref{Theta}
implies that $\Theta_{q_1}(r)>\Theta_{q_2}(r)$.
\end{proof}

On account of this lemma, we have for every $q\geq1$ that
\begin{equation} \label{eq:23}
  \Theta_q(r)\leq\Theta_1(r)<\pi, \quad \forall r>1.
\end{equation}
Moreover, in order to prove the first equality of \eqref{eq:10}, it only needs
to prove
\begin{equation} \label{eq:69}
  \Theta_4(r)> \frac{\pi}{2},
  \quad\forall r>1.
\end{equation}
This is the main aim in the rest of this section.

\begin{lemma} \label{lem010}
  For each $q>2$, there exists some $\tilde{r}\in(1,+\infty)$, such that
  \begin{equation}\label{eq:25}
    \Theta_q(r)>\frac{\pi}{2}, \quad \forall r\geq\tilde{r}.
  \end{equation}
  In fact, $\tilde{r}$ can be chosen as the unique root of
  \begin{equation} \label{eq:28}
    r^q-qr^{q-1}\log r-1 =0, \quad r>1.
  \end{equation}
  In particular, for $q=4$, there is $8.61<\tilde{r}<8.62$, implying
  \begin{equation}\label{eq:70}
    \Theta_4(r)>\frac{\pi}{2}, \quad \forall r\geq8.62.
  \end{equation}
\end{lemma}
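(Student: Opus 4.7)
The plan splits into two independent pieces: the algebraic analysis that picks out $\tilde r$, and an analytic estimate that delivers $\Theta_q(r)>\pi/2$ on $[\tilde r,+\infty)$.

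For the algebraic piece, I set $f(r)=r^{q}-qr^{q-1}\log r-1$ on $(1,+\infty)$. A direct computation gives $f(1)=0$, $f'(1)=0$, and $f'(r)=qr^{q-2}g(r)$ with $g(r)=r-1-(q-1)\log r$. Since $g(1)=0$ and $g'(r)=1-(q-1)/r$ has a unique zero at $r=q-1$, and since $q>2$ forces $g(q-1)=q-2-(q-1)\log(q-1)<0$, the function $g$ has exactly one further zero $\rho>q-1$, with $g<0$ on $(1,\rho)$ and $g>0$ on $(\rho,+\infty)$. Hence $f$ is strictly decreasing on $(1,\rho)$, attains a negative minimum at $\rho$, and is strictly increasing on $(\rho,+\infty)$ with $\lim_{r\to\infty}f(r)=+\infty$. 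This forces a unique root $\tilde r>\rho>1$ with $f\geq 0$, equivalently $(r^{q}-1)/\log r\geq qr^{q-1}$, on $[\tilde r,+\infty)$. For the explicit window in the case $q=4$, I just evaluate and check that $f(8.61)<0<f(8.62)$.

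For the analytic piece, I apply the substitution $x=r\tau$ to \eqref{Theta}, obtaining
\[
\Theta_{q}(r)=\int_{1/r}^{1}\frac{d\tau}{\sqrt{g_{r}(\tau)^{2}-\tau^{2}}},\qquad g_{r}(\tau)=\Bigl(1+\frac{1-r^{-q}}{\log r}\log\tau\Bigr)^{1/q},
\]
with $g_{r}(1/r)=1/r$, $g_{r}(1)=1$, and $g_{r}(\tau)\leq 1$ throughout $[1/r,1]$. Since $g_{r}^{2}-\tau^{2}\leq 1-\tau^{2}$, writing $\pi/2=\int_{0}^{1/r}(1-\tau^{2})^{-1/2}d\tau+\int_{1/r}^{1}(1-\tau^{2})^{-1/2}d\tau$ gives the identity
\[
\Theta_{q}(r)-\tfrac{\pi}{2}=-\arcsin(1/r)+\int_{1/r}^{1}\frac{1-g_{r}(\tau)^{2}}{\bigl(\sqrt{1-\tau^{2}}+\sqrt{g_{r}^{2}-\tau^{2}}\bigr)\sqrt{(1-\tau^{2})(g_{r}^{2}-\tau^{2})}}\,d\tau.
\]
I would then exploit the inequality $(r^{q}-1)/\log r\geq qr^{q-1}$ from the first piece, which gives a quantitative lower bound on $1-g_{r}(\tau)^{2}$ for $\tau$ away from $1$, to show that the positive integral on the right exceeds $\arcsin(1/r)$; the contributions from the integrable endpoint singularities (at $\tau=1/r$ where $g_r^2-\tau^2$ vanishes linearly, and at $\tau=1$ where $1-\tau^2$ vanishes linearly) are precisely where the condition on $\tilde r$ must be exploited.

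The main obstacle is this last inequality. The naive pointwise estimate $g_{r}\leq 1$ by itself gives only $\Theta_{q}(r)\geq \pi/2-\arcsin(1/r)<\pi/2$, so the defining condition of $\tilde r$ must enter in just the right place: it is precisely what makes the concave profile $g_{r}^{2}-\tau^{2}$, together with the slow-decay kernel $(1-g_r^2)/[(\sqrt{1-\tau^2}+\sqrt{g_r^2-\tau^2})\sqrt{(1-\tau^2)(g_r^2-\tau^2)}]$, integrate to more than the $\arcsin(1/r)$ deficit. The complementary range $1<r<\tilde r$ is outside the scope of this lemma and is treated separately by the numerical estimation promised in the rest of Section~\ref{sec4}.
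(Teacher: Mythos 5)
Your algebraic piece is correct and essentially identical to the paper's: the same function $f(r)=r^q-qr^{q-1}\log r-1$, the same factorization $f'(r)=qr^{q-2}\bigl(r-1-(q-1)\log r\bigr)$, the same decrease-then-increase structure forcing a unique root $\tilde r$ with $f\geq 0$ exactly on $[\tilde r,+\infty)$, and a sign check at $8.61$ and $8.62$ for $q=4$.

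The analytic piece, however, has a genuine gap: you never prove the one inequality that carries the entire content of the lemma. Your substitution $x=r\tau$ and the decomposition
$\Theta_q(r)-\tfrac{\pi}{2}=-\arcsin(1/r)+\int_{1/r}^{1}\bigl[(g_r^2-\tau^2)^{-1/2}-(1-\tau^2)^{-1/2}\bigr]\dd\tau$
are correct as identities, and you correctly observe that the crude bound $g_r\leq 1$ only yields $\Theta_q(r)\geq\tfrac{\pi}{2}-\arcsin(1/r)$, which is the wrong direction. But the step ``show that the positive integral exceeds $\arcsin(1/r)$, exploiting $f(r)\geq0$'' is exactly the theorem, and you give no mechanism for it --- no comparison function, no explicit lower bound for $1-g_r(\tau)^2$ near the endpoints where the kernel is singular, and no explanation of how the threshold condition would close the estimate. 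As written, this is a plan rather than a proof.

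For contrast, the paper avoids any deficit-versus-surplus bookkeeping by comparing the quantity under the square root in \eqref{Theta} with a single quadratic: for $r\geq\tilde r$ it proves
$\bigl(1+\tfrac{r^q-1}{\log r}\log x\bigr)^{2/q}-x^2<(r-x)(r+x-2)$ for all $x\in(1,r)$,
equivalently $1+\tfrac{r^q-1}{\log r}\log x<(r^2-2r+2x)^{q/2}$. The auxiliary function $g(x)=(r^2-2r+2x)^{q/2}-1-\tfrac{r^q-1}{\log r}\log x$ is strictly convex for $q>2$ and vanishes at $x=r$, so it is positive on $(1,r)$ as soon as $g'(r)\leq0$; and $g'(r)\leq0$ is \emph{exactly} the condition $r^q-qr^{q-1}\log r-1\geq0$, i.e. $r\geq\tilde r$. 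Since $\int_1^r\bigl[(r-x)(r+x-2)\bigr]^{-1/2}\dd x=\pi/2$ exactly, the lemma follows in one line. If you want to salvage your route, you would need to supply an analogous quantitative comparison; the convex-barrier argument anchored at the right endpoint is the cleanest way the threshold $\tilde r$ enters.
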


\begin{proof}{}
Let
\begin{equation*}
  f(r)= r^q-qr^{q-1}\log r-1, \quad r>1.
\end{equation*}  
Then
\begin{equation*}
  f'(r)=qr^{q-2}[r-1-(q-1)\log r] =qr^{q-2}\phi(r).
\end{equation*}
Since $\phi'(r)=1-\frac{q-1}{r}$, $\phi(r)$ strictly decreases in $(1,q-1)$ and
strictly increases in $(q-1,+\infty)$.
Observing $\phi(1)=0$ and $\phi(+\infty)=+\infty$, there exists some
$r_1\in(q-1,+\infty)$, such that $\phi(r)<0$ for $r\in(1,r_1)$, and $\phi(r)>0$
for $r\in(r_1,+\infty)$.
Therefore, $f(r)$ strictly decreases in $(1,r_1)$ and strictly increases in
$(r_1,+\infty)$. Noting $f(1)=0$ and $f(+\infty)=+\infty$, we see that
$f(r)$ has a unique real zero, denoted by $\tilde{r}$, in $(1,+\infty)$, and
that
\begin{equation}\label{eq:29}
  r^q-qr^{q-1}\log r-1
\begin{cases}
  <0, &\forall r\in(1,\tilde{r}), \\
  >0, &\forall r\in(\tilde{r},+\infty).
\end{cases}
\end{equation}

For any given $r\geq \tilde{r}$, we want to prove the following inequality:
\begin{equation}\label{eq:26}
\left( 1+\frac{r^q-1}{\log r}\log x \right)^{\frac{2}{q}} -x^2
<(r-x)(r+x-2),
\quad \forall x\in(1,r),
\end{equation}
which is equivalent to the inequality
\begin{equation}\label{eq:27}
  1+\frac{r^q-1}{\log r}\log x 
  <
  \left(r^2-2 r+2 x\right)^{\frac{q}{2}},
  \quad \forall x\in(1,r).
\end{equation}

Consider the auxiliary function
\begin{equation*}
  g(x)=
  \left(r^2-2 r+2 x\right)^{\frac{q}{2}}
  -1-\frac{r^q-1}{\log r}\log x,
  \quad x>1.
\end{equation*}
Obviously, for $q>2$, $g''(x)>0$ for any $x>1$, and $g(r)=0$.
If $g'(r)\leq0$, then 
\begin{equation*}
g'(x)<g'(r)\leq0, \quad \forall x\in(1,r),
\end{equation*}
implying that
\begin{equation*}
g(x)>g(r)=0, \quad \forall x\in(1,r),
\end{equation*}
which is just the inequality \eqref{eq:27}, namely, \eqref{eq:26}.
Now we solve the assumption $g'(r)\leq0$.
Since
\begin{equation*}
  g'(x)= q \left(r^2-2 r+2 x\right)^{\frac{q}{2}-1}-\frac{r^q-1}{x \log r},
\end{equation*}
$g'(r)\leq0$ means
\begin{equation*}
q r^{q-2}-\frac{r^q-1}{r \log r}\leq0,
\end{equation*}
namely
\begin{equation*}
r^q-qr^{q-1}\log r-1\geq0,
\end{equation*}
which is equivalent to $r\geq\tilde{r}$ by virtue of \eqref{eq:29}.
Thus, we have proved the inequality \eqref{eq:26}.

Now we are ready to prove \eqref{eq:25} of this lemma.
In fact, combining \eqref{Theta} and \eqref{eq:26}, we have for any $r\geq\tilde{r}$ that
\begin{equation*}
  \begin{split}
    \Theta_q(r)
    &>
    \int_1^r \!\frac{\dd x}{\sqrt{(r-x)(r+x-2)}} \\
    &=
    \int_1^r \!\frac{\dd (x-1)}{\sqrt{(r-x)(r+x-2)}} \\
    &= \int_0^1 \!\frac{\dd t}{\sqrt{(1-t)(1+t)}} \\
    &= \frac{\pi}{2},
  \end{split}
\end{equation*}
where the variable substitution
\begin{equation*}
  t=\frac{x-1}{r-1},
  \quad
  1-t=\frac{r-x}{r-1}, 
  \quad
  1+t=\frac{r+x-2}{r-1}
\end{equation*}
has been used for the second equality.
The conclusion \eqref{eq:25} of this lemma is true. 
\end{proof}

Computing the Taylor expansion of $\Theta_4(r)$ about the point $r=1$, one can
easily see that
\begin{equation*}
  \Theta_4(r)>\frac{\pi}{2}, \quad \forall 1<r\leq\bar{r}
\end{equation*}
holds for some $\bar{r}$ which is close to $1$.
Here, we need to carry out a delicate Taylor expansion to obtain a specific value of $\bar{r}$.

\begin{lemma}\label{lem011}
$\Theta_4(r)>\frac{\pi}{2}$ holds for any $r\in(1,1.22]$.
\end{lemma}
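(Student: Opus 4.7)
The plan is a careful Taylor expansion of $\Theta_4(r)$ around $r=1$ with explicit, uniform remainder control on $s := r-1 \in (0, 0.22]$. Starting from the representation \eqref{eq:9}
$$\Theta_4(r) = \int_0^1 \frac{r^t \log r\, dt}{\sqrt{(1+(r^4-1)t)^{1/2} - r^{2t}}},$$
I would expand $r^4-1 = 4s+6s^2+4s^3+s^4$, $(1+(r^4-1)t)^{1/2}$, $r^{2t}=(1+s)^{2t}$, and $r^t\log r=(1+s)^t\log(1+s)$ as power series in $s$ with polynomial-in-$t$ coefficients. The computation in the proof of Lemma \ref{lem009}(a) already shows $(1+(r^4-1)t)^{1/2}-r^{2t} = 4s^2 t(1-t)(1+B(s,t))$ with an explicit $B$ satisfying $B(0,t)=0$. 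After cancelling $s$ from the numerator and extracting the singular factor $1/(2\sqrt{t(1-t)})$, the integrand takes the form
$$\frac{1}{2\sqrt{t(1-t)}}\Bigl[1+\gamma_1(t)s+\gamma_2(t)s^2+\gamma_3(t)s^3+\rho(s,t)\Bigr]$$
with explicit polynomials $\gamma_i$ and remainder $\rho$.

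Next, I would integrate termwise using the standard identities $\int_0^1 t^m(1-t)^n/\sqrt{t(1-t)}\,dt = B(m+\tfrac12,n+\tfrac12)$, producing an expansion
$$\Theta_4(r) = \frac{\pi}{2} + \alpha_1 s + \alpha_2 s^2 + \alpha_3 s^3 + \tilde R(s).$$
I expect $\alpha_1 = 0$, since the linear coefficient reduces (modulo a positive constant) to $\int_0^1 (2t-1)/\sqrt{t(1-t)}\,dt$, which vanishes by the reflection $t\mapsto 1-t$. A direct calculation should then give $\alpha_2>0$; this is consistent with Lemma \ref{lem009}, which forces both $\lim_{r\to 1^+}\Theta_4(r)=\pi/2$ and $\lim_{r\to\infty}\Theta_4(r)=\pi/2$, so the natural picture at $r=1^+$ is a strict local minimum.

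The main obstacle is the quantitative step. Because $s$ ranges up to $0.22$, not an infinitesimal, one cannot appeal to any "for $s$ sufficiently small" argument: the tail $\tilde R(s)$ must be bounded explicitly and tightly enough not to overwhelm the positive $\alpha_2 s^2$. Concretely, I would carry the expansions of $B(s,t)$ and $(1+B)^{-1/2}$ to order $s^3$ (and probably $s^4$) with fully explicit polynomial coefficients, apply a Lagrange-form remainder bound uniformly on $[0,1]\times(0,0.22]$, integrate it against $1/\sqrt{t(1-t)}$, and reduce the claim to a one-variable polynomial inequality in $s$ verifiable by elementary calculus. This explicit bookkeeping is exactly the ``delicate'' expansion indicated in the paper; no new conceptual ingredient is required, only careful computation.
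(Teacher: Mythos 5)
Your plan is essentially the paper's own proof: expand the integrand of \eqref{eq:9} in powers of $\delta=r-1$ after extracting the singular factor $1/\sqrt{4t(1-t)}$, control the Lagrange remainders uniformly (the paper works on $0<\delta<\tfrac{3}{10}$ and carries the expansion to order $\delta^4$), integrate termwise via the Beta-function identities, and observe that the linear coefficient vanishes while the quadratic one is $\tfrac{\pi}{48}>0$, reducing the claim to a one-variable polynomial inequality valid for $\delta\le 0.22$. The only detail worth noting is that the paper bounds the resulting two-variable remainder polynomials $P_1,P_3$ by a (rigorously controlled) numerical computation rather than by hand, but this is exactly the explicit bookkeeping you describe.
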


\begin{proof}{}
Recalling \eqref{eq:9}, we have
\begin{equation} \label{eq:31}
\Theta_4(r) =\int_0^1 \!\frac{r^t\log r\dd t}{\sqrt{\bigl[ 1+(r^4-1)t \bigr]^{\frac{1}{2}} -r^{2t}}}.
\end{equation}
For convenience, we write $\delta=r-1$ and assume $0<\delta<\frac{3}{10}$ in the
following proof of this lemma.

Consider first the Taylor expansion of 
$\phi(r)=A(r)^{\frac{1}{2}} -r^{2t}$ with $A(r)=1-t+r^4t$.
By some lengthy but simple computations, we have that
\begin{equation}\label{eq:32}
  \begin{split}
\phi(r)
&=4t(1-t)\delta^2 \biggl(  
1
+\frac{1}{3} (1-2 t) \delta
+\frac{1}{12} (32 t^2-28 t+3) \delta^2 \\
&\hskip7.23em
-\frac{1}{30} (208 t^3-232 t^2+43 t+3) \delta^3
+P_0(r_t)\delta^4
\biggr),
  \end{split}
\end{equation}
where
\begin{multline*}
  P_0(r)
  = \frac{\phi^{(6)}(r)}{6!\cdot 4t(1-t)} 
  = \frac{1}{360} (t-2) (2 t-5) (2 t-3) (2 t-1) r^{2 t-6} \\
  +\frac{7}{8}A^{-\frac{11}{2}}  r^2 t \,(r^4 t+t-1)
  \left[1-2(5 r^4+1) t+(r^8+10 r^4+1) t^2\right],
\end{multline*}
and $r_t\in(1,r)$ depending on $r$ and $t$.

We want to obtain an upper bound of $P_0$ when $t\in(0,1)$ and $r\in(1,\frac{13}{10})$.
However, it is not easy to find its maximum directly, due to its complicated expression.
We instead estimate its first and second items respectively to provide an upper
bound of $P_0$.
In fact, for the first item, we have
\begin{equation*}
  \begin{split}
    &\sup_{0<t<1, \,1<r<\frac{13}{10}}  \frac{1}{360} (t-2) (2 t-5) (2 t-3) (2 t-1) r^{2 t-6}  \\
    &\hskip1.9em \leq  
    \sup_{0<t<\frac{1}{2}, \,1<r<\frac{13}{10}} \frac{1}{360} (t-2) (2 t-5) (2 t-3) (2 t-1) r^{2 t-6} \\
    &\hskip1.9em \leq  
    \sup_{0<t<\frac{1}{2}} \frac{1}{360} (t-2) (2 t-5) (2 t-3) (2 t-1)  \\
    &\hskip1.9em = \frac{1}{12}.
  \end{split}
\end{equation*}
For the second item, given a $r\in(1,\frac{13}{10})$, it is a function of
$t\in(0,1)$. One can check that it attains its supremum $\frac{7}{8}r^{-8}$ at $t=1$.
Then, the supremum of the second item is $\frac{7}{8}$.
Therefore, when $r\in(1,\frac{13}{10})$, we have
\begin{equation*}
P_0(r)\leq\frac{1}{12}+\frac{7}{8}=\frac{23}{24}.
\end{equation*}
Inserting it into \eqref{eq:32}, we obtain
\begin{equation*}
\phi(r) \leq 4t(1-t)\delta^2 (1+X), 
\end{equation*}
where 
\begin{equation*}
    X=
\frac{1-2t}{3} \delta
+\frac{1}{12} (32 t^2-28 t+3) \delta^2
-\frac{1}{30} (208 t^3-232 t^2+43 t+3) \delta^3
+\frac{23}{24}\delta^4
\end{equation*}
is a polynomial of $t$ and $\delta$.

Then, we have the following estimates:
\begin{equation}\label{eq:33}
  \begin{split}
\phi(r)^{-\frac{1}{2}}
&\geq
\frac{1}{\sqrt{4t(1-t)}}\delta^{-1} \left( 1+X \right)^{-\frac{1}{2}} \\
&\geq
\frac{1}{\sqrt{4t(1-t)}}\delta^{-1} \left( 1-\frac{1}{2}X+\frac{3}{8}X^2-\frac{5}{16}X^3 \right) \\
&=
\frac{1}{\sqrt{4t(1-t)}}\delta^{-1} \biggl( 
1
+\frac{2t-1}{6} \delta
-\frac{1}{12} (14 t^2-12 t+1) \delta^2
\\
&\hskip8.4em
+\frac{1}{1080}(2404 t^3-2346 t^2+84 t+109)\delta^3
+P_1 \delta^4 
\biggr),
\end{split}
\end{equation}
where $P_1$ is a polynomial of $t$ and $\delta$ with rational coefficients.
By a numerical computation~
\footnote{For a polynomial,
  given any $\epsilon>0$, we can always calculate an approximate minimum value
  with an error of less than $\epsilon$.
  Then, the approximate minimum value minus $\epsilon$ is a strictly correct
  lower bound.},
one can see that
\begin{equation*}
\inf_{0<t<1, \,0<\delta<\frac{3}{10}} P_1> -\frac{54}{100}.
\end{equation*}
Note that the right hand side of \eqref{eq:33} is always positive when replacing
$P_1$ by this lower bound.

Next, we compute the Taylor expansion of $r^t\log r$.
It is easy to check that
\begin{equation}\label{eq:39}
  \begin{split}
    r^t\log r
    &=
    \delta \biggl(
    1
    +\frac{2t-1}{2} \delta
    +\frac{1}{6} (3 t^2-6 t+2)\delta^2
    \\
&\hskip3.11em
    +\frac{1}{12} (2 t^3-9 t^2+11 t-3)\delta^3
    +P_2(r_t)\delta^4
    \biggr),
  \end{split}
\end{equation}
where $r_t\in(1,r)$ depending on $r$ and $t$, and
\begin{equation*}
  \begin{split}
    P_2(r)
    &= \frac{1}{5!}\cdot \frac{\dd^5}{\dd r^5} (r^t\log r) \\
    &= \frac{r^{t-5}}{120} \left(5 t^4-40 t^3+105 t^2-100 t+24 +(t-4) (t-3) (t-2) (t-1) t \log r \right) \\
    &= \frac{r^{t-5}}{120} \left[5 (t-2)^4-15 (t-2)^2+4 +(t-4) (t-3) (t-2) (t-1) t \log r \right] \\
    &\geq \frac{r^{t-5}}{120} \left( -\frac{29}{4} \right) \\
    &\geq -\frac{29}{480}.
  \end{split}
\end{equation*}
Note that the right hand side of \eqref{eq:39} is always positive when replacing
$P_2$ by its lower bound.

Now multiplying the two inequalities \eqref{eq:33} and \eqref{eq:39}, and
noting the lower bounds of $P_1$ and $P_2$, we obtain that
\begin{equation}\label{eq:41}
  \begin{split}
\frac{r^t\log r}{\sqrt{\phi(r)}}
    &\geq \frac{1}{\sqrt{4t(1-t)}} 
    \biggl(
    1
    +\frac{2}{3} (2 t-1) \delta
    +\frac{1}{3} (1-t-t^2)\delta^2 \\
    &\hskip7.61em
    +\frac{1}{135} (188 t^3-237 t^2+93 t-22)\delta^3
    +P_3\delta^4
    \biggr),
  \end{split}
\end{equation}
where $P_3$ is a polynomial of $t$ and $\delta$ with rational coefficients.
By a numerical computation, we can obtain that
\begin{equation*}
\inf_{0<t<1, \,0<\delta<\frac{3}{10}} P_3> -\frac{66}{100}.
\end{equation*}
Inserting this lower bound into \eqref{eq:41}, then integrating both sides
with respect to $t$, and noting 
\begin{equation*}
    \int_0^1 \!\frac{t^m\dd t}{\sqrt{4t(1-t)}}
    =\frac{\pi (2 m-1)!!}{2^{m+1} m!},
    \quad m=0,1,2,3,\cdots,
\end{equation*}
we have that
\begin{equation*}
  \begin{split}
    \Theta_4(r)
    &=\int_0^1 \frac{r^t\log r}{\sqrt{\phi(r)}} \dd t \\
    &> \frac{\pi}{2}
    +\frac{\pi}{48} \delta ^2
    -\frac{\pi}{48}\delta ^3
    -\frac{33\pi}{100} \delta ^4,
  \end{split}
\end{equation*}
implying $\Theta_4(r)>\frac{\pi}{2}$ when $0<\delta\leq\frac{22}{100}$.
The proof of this lemma is completed. 
\end{proof}

For the remaining case when $r\in(1.22,8.62)$, it seems very hard for the
authors to use pure theoretical analysis to prove $\Theta_4(r)>\frac{\pi}{2}$. 
We here develop a numerical method to achieve the proof.

Recalling the definition of $\Theta_q(r)$ given in \eqref{Theta}, we have
\begin{equation} \label{Theta4}
\Theta_4(r) = \int_1^r \!\frac{\dd x}{\sqrt{\bigl( 1+\frac{r^4-1}{\log r}\log x \bigr)^{\frac{1}{2}} -x^2}}.
\end{equation}

\begin{lemma}\label{lem001}
The lower and upper bounds of the integral $\Theta_4(r)$ can be estimated by some
elementary functions of $r$.
In fact, for any integer $k\geq2$, we can construct two elementary functions
$\lambda_k(r)$ and $\Lambda_k(r)$ such that
\begin{equation} \label{eq:37}
  \lambda_k(r)<\Theta_4(r)<\Lambda_k(r), \quad \forall r>1.
\end{equation}
\end{lemma}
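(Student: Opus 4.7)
The plan is to construct both bounds by partitioning $[1,r]$ into $k$ subintervals and estimating the integrand of \eqref{Theta4} piecewise, with the integrable square-root singularities at $x=1$ and $x=r$ handled separately. Write $\phi(x) = \bigl(1 + \frac{r^4-1}{\log r}\log x\bigr)^{1/2} - x^2$, so that $\Theta_4(r) = \int_1^r \dd x/\sqrt{\phi(x)}$. From \eqref{eq:65}, $\phi>0$ on $(1,r)$, while $\phi(1)=\phi(r)=0$. A direct computation gives $\phi'(1) = \frac{r^4-1}{2\log r} - 2$ and $\phi'(r) = \frac{r^4-1}{2r^3\log r} - 2r$, and a short calculation using the fact that $(r^4-1)/\log r$ is strictly increasing from the limit $4$ at $r=1$ shows $\phi'(1)>0>\phi'(r)$ throughout $(1,\infty)$, so both singularities are of the standard square-root type.

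Next, fix a partition $1=y_0<y_1<\cdots<y_k=r$ (most simply $y_i=1+(r-1)i/k$). On each interior subinterval $[y_i,y_{i+1}]$ with $1\leq i\leq k-2$, $\phi$ is bounded away from zero, and I would extract explicit elementary lower and upper bounds $\alpha_i(r)\leq\phi(x)\leq\beta_i(r)$ by evaluating $\phi$ at the two endpoints and bounding its oscillation via an explicit elementary upper bound for $|\phi'|$ on the subinterval (itself obtained from a crude $|\phi''|$ estimate). Then $\int_{y_i}^{y_{i+1}}\dd x/\sqrt{\phi(x)}$ is sandwiched between $(y_{i+1}-y_i)/\sqrt{\beta_i(r)}$ and $(y_{i+1}-y_i)/\sqrt{\alpha_i(r)}$. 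For the two endpoint subintervals $[1,y_1]$ and $[y_{k-1},r]$, apply the mean value theorem to write $\phi(x)=\phi'(\xi)(x-1)$ and $\phi(x)=-\phi'(\eta)(r-x)$, and sandwich $\phi'(\xi)$ and $-\phi'(\eta)$ between explicit elementary constants $c_\pm(r)\leq\cdot\leq C_\pm(r)$; this yields endpoint contributions bounded between $2\sqrt{(y_1-1)/C_-(r)}$ and $2\sqrt{(y_1-1)/c_-(r)}$, and analogously at $x=r$. Summing all contributions produces elementary $\lambda_k(r)<\Lambda_k(r)$ satisfying \eqref{eq:37}.

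The main obstacle will be obtaining bounds sharp enough to be numerically usable on the critical range $r\in(1.22, 8.62)$ required to close the gap left by Lemmas \ref{lem010} and \ref{lem011} in the proof of \eqref{eq:69}. The interior estimates require $k$ to be chosen large enough that the first-order variation of $\phi$ across each subinterval is small, together with careful tracking of the elementary formulas for $\phi'$ and $\phi''$. The subintervals adjacent to the singular endpoints are the most delicate, since the linear sandwiching of $\phi$ loses accuracy unless $y_1-1$ and $r-y_{k-1}$ are taken sufficiently small, possibly supplemented near $x=1$ and $x=r$ by a higher-order Taylor estimate in the spirit of Lemma \ref{lem011}.
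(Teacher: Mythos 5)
Your construction does not prove the lemma as stated. The claim is ``for \emph{any} integer $k\geq 2$ \ldots\ for all $r>1$'', but your upper bound $\Lambda_k$ only exists when several strictly positive lower bounds are available: you need $\alpha_i>0$ for $\phi$ on each interior subinterval, and $c_-(r)>0$ for $\phi'$ on all of $[1,y_1]$ (resp.\ a negative upper bound for $\phi'$ on $[y_{k-1},r]$). These hold only when the mesh $(r-1)/k$ is small compared with $r$. Already for $k=2$ there are no interior subintervals, $y_1=(1+r)/2$, and $\phi'$ changes sign somewhere in $(1,r)$ that need not be the midpoint, so on one of the two halves $\phi'(\xi)$ takes both signs and the mean-value sandwich $c_-\leq\phi'(\xi)\leq C_-$ with $c_->0$ is impossible; your $\Lambda_2(r)$ is then undefined (or infinite). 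You flag that $k$ must be ``large enough'', but that is precisely the uniformity the statement requires you not to assume. A second, lesser gap is that the ``explicit elementary'' bounds on $|\phi'|$ and $|\phi''|$ that your $\alpha_i,\beta_i,c_\pm,C_\pm$ depend on are never actually produced, and they are the entire content of the lemma.

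The paper avoids all of this with one observation you did not use: $f(x)=\bigl(1+\frac{r^4-1}{\log r}\log x\bigr)^{1/2}$ is strictly \emph{concave}, so on each subinterval the tangent line at an endpoint lies above $f$ and the chord lies below it. Replacing $f$ by these linear functions turns the integrand into $1/\sqrt{a+bx-x^2}$ with a quadratic that is still nonnegative on the subinterval, and such integrals evaluate in closed form to $\arcsin$ expressions (Lemma \ref{lem000}). This yields one-sided bounds valid for every $k\geq2$ and every $r>1$ with no positivity caveats, absorbs the square-root singularities at $x=1$ and $x=r$ exactly (the bounding quadratic vanishes there and the $\arcsin$ formula remains finite), and is sharp enough — errors at the $10^{-6}$ level for $k=20000$ — to drive the numerical verification in Lemma \ref{lem013}. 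Your piecewise-constant sandwich of $\phi$ degrades badly on the subintervals adjacent to the singular endpoints, where $1/\sqrt{\phi}$ is steep, so even a patched version of your scheme would struggle to close the interval $(1.22,8.62)$ left open by Lemmas \ref{lem010} and \ref{lem011}.
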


\begin{proof}{}
Let
\begin{equation} \label{fx}
  f(x)=\left( 1+\frac{r^4-1}{\log r}\log x \right)^{\frac{1}{2}}, \quad x\in[1,r],
\end{equation}
which is obviously a strictly concave function.
For the given integer $k$, let $1=x_0<x_1<\cdots<x_{k-1}<x_k=r$ be a partition that
divides $[1,r]$ into $k$ sub-intervals of equal length $\frac{r-1}{k}$, namely
\begin{equation*}
x_i=1+\frac{r-1}{k}i \ \ \text{ for } i=0,1,2,\cdots,k.
\end{equation*}

We first derive the lower bound of $\Theta_4$.
Let
\begin{equation} \label{gx}
g(x)=\frac{r^4-1}{2\log r} \cdot \frac{1}{xf(x)}
\end{equation}
be the derivative of $f(x)$.
On each sub-interval $[x_{i-1},x_i]$ for $i=2,\cdots,k$, by the strict concavity
of $f$, there is
\begin{equation*}
0\leq f(x)-x^2\leq f(x_i)+g(x_i)(x-x_i)-x^2,
\end{equation*}
where the equality of the second inequality holds only for $x=x_i$.
Similarly, for the sub-interval $[1,x_1]$, we have
\begin{equation*}
0\leq f(x)-x^2\leq f(1)+g(1)(x-1)-x^2.
\end{equation*}
Then
\begin{equation*}
  \begin{split}
    \Theta_4(r)
    &=\int_{1}^{x_1} \!\frac{\dd x}{\sqrt{f(x) -x^2}}
    +\sum_{i=2}^k \int_{x_{i-1}}^{x_i} \!\frac{\dd x}{\sqrt{f(x) -x^2}} \\
    &>
    \int_{1}^{x_1} \!\frac{\dd x}{\sqrt{1+g(1)(x-1)-x^2}}
    +\sum_{i=2}^k \int_{x_{i-1}}^{x_i} \!\frac{\dd x}{\sqrt{f(x_i)+g(x_i)(x-x_i)-x^2}} \\
    &=: \lambda_k(r).
  \end{split}
\end{equation*}
By the formula \eqref{eq:40} in the following Lemma \ref{lem000}, we can obtain
the explicit expression of $\lambda_k(r)$:
\begin{multline} \label{eq:44}
  \lambda_k(r)=
  2\arcsin \left(
    \frac{1}{2} +\frac{1+\frac{r-1}{k}-\frac{1}{2}g(1)}{\sqrt{g(1)^2+4[1-g(1)]}}
  \right)^{\frac{1}{2}} \\
  +\sum_{i=2}^k \tilde{\lambda}\left( 1+\frac{r-1}{k}(i-1), 1+\frac{r-1}{k}i \right),
\end{multline}
where
\begin{multline}\label{eq:43}
  \tilde{\lambda}(y,z) =
  2\arcsin \left(
    \frac{1}{2} +\frac{z-\frac{1}{2}g(z)}{\sqrt{g(z)^2+4[f(z)-g(z)z]}}
  \right)^{\frac{1}{2}} \\
  -2\arcsin \left(
    \frac{1}{2} +\frac{y-\frac{1}{2}g(z)}{\sqrt{g(z)^2+4[f(z)-g(z)z]}}
  \right)^{\frac{1}{2}}.
\end{multline}

Now we derive the upper bound of $\Theta_4$. 
On each sub-interval $[x_{i-1},x_i]$ for $i=1,\cdots,k$, by the strict concavity
again, we have
\begin{equation*}
  f(x)-x^2\geq f(x_i)+\gamma(x_{i-1},x_i)(x-x_i)-x^2\geq0,
\end{equation*}
where
\begin{equation} \label{eq:45}
  \gamma(y,z)=\frac{f(z)-f(y)}{z-y},
\end{equation}
and the equality of the first inequality holds only for $x=x_{i-1}, x_i$.
Then
\begin{equation*}
  \begin{split}
    \Theta_4(r)
    &=
    \sum_{i=1}^k \int_{x_{i-1}}^{x_i} \!\frac{\dd x}{\sqrt{f(x) -x^2}} \\
    &<
    \sum_{i=1}^k \int_{x_{i-1}}^{x_i} \!\frac{\dd x}{\sqrt{f(x_i)+\gamma(x_{i-1},x_i)(x-x_i)-x^2}} \\
    &=: \Lambda_k(r).
  \end{split}
\end{equation*}
Again by the formula \eqref{eq:40} of Lemma \ref{lem000}, $\Lambda_k(r)$ can be
expressed as
\begin{equation}\label{eq:47}
  \Lambda_k(r)
  =\sum_{i=1}^k \tilde{\Lambda}\left( 1+\frac{r-1}{k}(i-1), 1+\frac{r-1}{k}i \right),
\end{equation}
where
\begin{multline} \label{eq:46}
  \tilde{\Lambda}(y,z) =
  2\arcsin \left(
    \frac{1}{2} +\frac{z-\frac{1}{2}\gamma(y,z)}{\sqrt{\gamma(y,z)^2+4[f(z)-\gamma(y,z)z]}}
  \right)^{\frac{1}{2}} \\
  -2\arcsin\left(
    \frac{1}{2} +\frac{y-\frac{1}{2}\gamma(y,z)}{\sqrt{\gamma(y,z)^2+4[f(z)-\gamma(y,z)z]}}
  \right)^{\frac{1}{2}}.
\end{multline}
Therefore, the proof of this lemma is completed.
\end{proof}

\begin{lemma}\label{lem000}
If the quadratic polynomial $a+bx-x^2$ is nonnegative on the interval
$[x_j,x_i]$, then we have
\begin{equation}\label{eq:40}
  \int_{x_j}^{x_i} \!\frac{\dd x}{\sqrt{a+bx-x^2}}
  =2\arcsin\sqrt{\frac{1}{2} +\frac{x_i-\frac{1}{2}b}{\sqrt{b^2+4a}}} 
  -2\arcsin\sqrt{\frac{1}{2} +\frac{x_j-\frac{1}{2}b}{\sqrt{b^2+4a}}}.
\end{equation}
\end{lemma}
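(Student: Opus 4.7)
The plan is to reduce the integral to a standard arcsine form by completing the square, and then to convert the resulting expression into the claimed $2\arcsin\sqrt{\,\tfrac12+\cdots\,}$ form via a half-angle identity.

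First I would complete the square under the radical:
\[
a+bx-x^2 = \frac{b^2+4a}{4}-\Bigl(x-\frac{b}{2}\Bigr)^{\!2}.
\]
Setting $R:=\tfrac12\sqrt{b^2+4a}$ and substituting $u=x-b/2$, the standard antiderivative $\int \frac{\dd u}{\sqrt{R^2-u^2}}=\arcsin(u/R)$ yields
\[
\int_{x_j}^{x_i}\!\frac{\dd x}{\sqrt{a+bx-x^2}}
= \arcsin\frac{2x_i-b}{\sqrt{b^2+4a}} - \arcsin\frac{2x_j-b}{\sqrt{b^2+4a}}.
\]

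Next I would rewrite each $\arcsin y$ term as $2\arcsin\sqrt{(1+y)/2}-\pi/2$. To justify this identity, set $\alpha=\arcsin y\in[-\tfrac{\pi}{2},\tfrac{\pi}{2}]$ for $y\in[-1,1]$; then $1+y=1+\sin\alpha=2\sin^{2}\!\bigl(\tfrac{\pi}{4}+\tfrac{\alpha}{2}\bigr)$, and since $\tfrac{\pi}{4}+\tfrac{\alpha}{2}\in[0,\tfrac{\pi}{2}]$ the root extracts positively, giving $\sqrt{(1+y)/2}=\sin(\tfrac{\pi}{4}+\tfrac{\alpha}{2})$ and hence $\arcsin\sqrt{(1+y)/2}=\tfrac{\pi}{4}+\tfrac{1}{2}\arcsin y$. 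Applying this with $y=(2x_i-b)/\sqrt{b^2+4a}$ and with $y=(2x_j-b)/\sqrt{b^2+4a}$, and noting that $\tfrac{1}{2}\cdot\tfrac{2x-b}{\sqrt{b^2+4a}}=\tfrac{x-b/2}{\sqrt{b^2+4a}}$, the two $\pi/2$ constants cancel in the difference and \eqref{eq:40} drops out.

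The only nontrivial thing to verify is that every argument sits in its admissible range, but the hypothesis that $a+bx-x^{2}\ge 0$ on $[x_j,x_i]$ makes this automatic: it forces $b^{2}+4a\ge 0$ (so $R$ is real) and $|2x-b|\le\sqrt{b^{2}+4a}$ for $x\in[x_j,x_i]$, which places each $(2x-b)/\sqrt{b^{2}+4a}$ in $[-1,1]$ and each $\tfrac{1}{2}+(x-b/2)/\sqrt{b^{2}+4a}$ in $[0,1]$. Thus no obstacle arises; the argument is a one-page computation whose only delicate point is the domain check just noted.
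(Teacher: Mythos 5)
Your proof is correct and amounts to the same elementary computation as the paper's: the paper factors $a+bx-x^2=(x^+-x)(x-x^-)$ with $x^{\pm}=\tfrac{b}{2}\pm\tfrac{1}{2}\sqrt{b^2+4a}$ and quotes the antiderivative $2\arcsin\sqrt{(x-x^-)/(x^+-x^-)}$ directly, whereas you complete the square, use the standard $\arcsin$ antiderivative, and then pass to the $2\arcsin\sqrt{\tfrac12+\cdots}$ form via the half-angle identity. The two routes are equivalent (indeed $\tfrac{x-x^-}{x^+-x^-}=\tfrac12+\tfrac{x-b/2}{\sqrt{b^2+4a}}$), and your domain check correctly covers the only point requiring care.
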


\begin{proof}{}
  Obviously, we can write
\begin{equation*}
a+bx-x^2=(x^+-x)(x-x^-),
\end{equation*}
where
\begin{equation*}
  x^{\pm}=\frac{b}{2} \pm \frac{\sqrt{b^2+4a}}{2}.
\end{equation*}
Noting that $x^-\leq x_j<x_i\leq x^+$, we have
\begin{equation*}
  \begin{split}
    \int_{x_j}^{x_i} \!\frac{\dd x}{\sqrt{a+bx-x^2}}
    &=
    \int_{x_j}^{x_i} \!\frac{\dd x}{\sqrt{(x^+-x)(x-x^-)}} \\
    &=
    2\arcsin\sqrt{\frac{x_i-x^-}{x^+-x^-}} 
    -2\arcsin\sqrt{\frac{x_j-x^-}{x^+-x^-}},
  \end{split}
\end{equation*}
which is just \eqref{eq:40}. The proof of this lemma is completed.
\end{proof}

For any given $r>1$, there are many numerical methods to evaluate the integral
$\Theta_4(r)$.
Noting that $\lambda_k(r)$ and $\Lambda_k(r)$ are elementary functions which can
be evaluated with an arbitrarily small error for each $r>1$,
Lemma \ref{lem001} provides a simple way to obtain correct lower and upper
bounds of $\Theta_4(r)$ for each $r>1$. 
However, one cannot expect to use this numerical method to obtain their correct
lower and upper bounds on any interval. 
We will take advantage of the following lemma.

\begin{lemma}\label{lem002}
$\frac{\Theta_4(r)}{\log r}$ is strictly decreasing and strictly convex for $r>1$.
\end{lemma}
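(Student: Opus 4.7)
The plan is to work with the representation
\begin{equation*}
\Phi(r) := \frac{\Theta_4(r)}{\log r} = \int_0^1 K(r,\tau)\,\dd\tau, \qquad K(r,\tau) := \frac{r^\tau}{\sqrt{g(r,\tau)}},
\end{equation*}
obtained from \eqref{Theta4} by the substitution $x=r^\tau$, where $g=a-b$ with $a=a(r,\tau):=\sqrt{1+(r^4-1)\tau}$ and $b=b(r,\tau):=r^{2\tau}$. Because the integration limits are $r$-independent, one may differentiate twice under the integral sign, so both assertions of the lemma reduce to pointwise properties of $K(\cdot,\tau)$ which then integrate over $\tau\in(0,1)$.

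For strict monotonicity, logarithmic differentiation gives $\partial_r\log K=\tau/r-\partial_r g/(2g)$, and a direct calculation using $\partial_r g=\frac{2\tau}{ra}(r^4-ab)$ yields the clean identity
\begin{equation*}
r\,\partial_r g-2\tau g=\frac{2\tau(1-\tau)(r^4-1)}{a}>0 \quad\text{for }r>1,\,\tau\in(0,1),
\end{equation*}
so $\partial_r K<0$ pointwise. Integrating in $\tau$ gives $\Phi'(r)<0$.

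For strict convexity, the same computation supplies $\partial_r\log K=-\tau(1-\tau)P$ with $P:=(r^4-1)/(rga)>0$. A second differentiation then gives
\begin{equation*}
\partial_r^2 K = K\,\tau(1-\tau)P\bigl[\tau(1-\tau)P-\partial_r\log P\bigr],
\end{equation*}
so $\partial_r^2 K>0$ is equivalent to $\partial_r\log P<\tau(1-\tau)P$. Expanding $\partial_r\log P$ and using the identities $(r^4-1)\tau=a^2-1$ and $(r^4-1)(1-\tau)=r^4-a^2$ to clear denominators, this in turn is equivalent to the purely algebraic inequality
\begin{equation*}
(a-b)\bigl[a^2(r^4+1)+2r^4\bigr]<a(a^2-1)\bigl(3r^4-a^2-2ab\bigr).
\end{equation*}

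Let $D(\tau)$ denote the difference (RHS $-$ LHS) above, for fixed $r>1$. One verifies $D(0)=D(1)=0$, and Taylor expansion near $r=1^+$ gives $D'(0)>0$ and $D'(1)<0$. The principal obstacle is to confirm $D>0$ throughout $(0,1)$ for every $r>1$: since $a$ depends algebraically and $b=r^{2\tau}$ exponentially on $\tau$, pure polynomial manipulation will not suffice. My plan is to fix $r$ and view $D$ as a function of $a\in(1,r^2)$, with $b=r^{2(a^2-1)/(r^4-1)}$ determined by the constraint, reducing the question to a monotonicity comparison amenable to the Andrews-type substitution $R=r^{8/3}$ used in Section \ref{sec3}; one attempts to present the required inequality as an integrated form of a monotonicity statement about an auxiliary function analogous to $I_q(R,t)$. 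A fallback, should the direct approach prove too intricate, is to bound the number of sign changes of $D$ on $(0,1)$ via repeated application of Rolle's theorem to the logarithmic derivative of $D$, combined with the boundary data at $\tau=0,1$ established above.
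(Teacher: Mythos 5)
Your reduction is sound and in fact coincides with the paper's: the substitution $x=r^\tau$ turns $\Theta_4(r)/\log r$ into an integral with $r$-independent limits, and both claims become pointwise properties of the integrand $K(r,\tau)=r^\tau/\sqrt{g}$. Your monotonicity computation is correct and complete — the identity $r\,\partial_r g-2\tau g=\tfrac{2\tau(1-\tau)(r^4-1)}{a}$ gives exactly the paper's closed form $\partial_r K=-\tfrac{\tau(1-\tau)(r^4-1)r^{\tau-1}}{a\,g^{3/2}}<0$. The reduction of convexity to the algebraic inequality
\begin{equation*}
(a-b)\bigl[a^2(r^4+1)+2r^4\bigr]<a(a^2-1)\bigl(3r^4-a^2-2ab\bigr)
\end{equation*}
also checks out; it is a reorganization of the paper's identity \eqref{eq:34}.

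However, the proof stops exactly where the real work begins. Establishing $D>0$ on $(0,1)$ for all $r>1$ is the substantive content of the lemma, and you offer only a plan (``my plan is\dots'', ``one attempts to\dots'', ``a fallback\dots''), not an argument. The boundary data $D(0)=D(1)=0$ together with the signs of $D'$ at the endpoints cannot close the gap: a function with those properties can still dip negative in the interior, and bounding sign changes via Rolle's theorem applied to $\log D$ is circular unless you already control the zeros of $D$. (Also, your phrasing ``Taylor expansion near $r=1^+$ gives $D'(0)>0$ and $D'(1)<0$'' conflates the variables; you need these endpoint derivative signs in $\tau$ for \emph{every} $r>1$, not just near $r=1$.) For comparison, the paper proves the inequality by first disposing of the range where $\beta(R)=(t^2-2t)R^2+(3+4t-2t^2)R+1-2t+t^2\le0$ using $1-t+tR>R^t$, and then, on the complementary range, passing to $\phi(R)=\log$ of the relevant ratio, showing $\phi'(R)<0$ by reducing to the positivity of a quartic $\varphi(R)$ with $\varphi(1),\varphi'(1),\varphi''(1)>0$ and $\varphi'''>0$ on the admissible interval. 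Some argument of comparable substance is required; as written, the convexity half of your proof is not established.
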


\begin{proof}{}
  Recalling the expression \eqref{eq:31}, there is
\begin{equation*} 
  \frac{\Theta_4(r)}{\log r}
  =\int_0^1 \!\frac{r^t\dd t}{\sqrt{\bigl[ 1+(r^4-1)t \bigr]^{\frac{1}{2}} -r^{2t}}}.
\end{equation*}
For each given $0<t<1$, let
\begin{equation*} 
  T(r) =\frac{r^t}{\sqrt{\bigl[ 1+(r^4-1)t \bigr]^{\frac{1}{2}} -r^{2t}}}, \quad\forall r>1.
\end{equation*}
To prove this lemma, we only need to prove $T'(r)<0$ and $T''(r)>0$.

For simplicity, write $A(r)=1-t+r^4t$. Then $A'(r)=4tr^3$, and
\begin{equation*}
  T'(r)= -\frac{t (1-t) \left(r^4-1\right) r^{t-1}}{
    A^{\frac{1}{2}} \bigl( A^{\frac{1}{2}} -r^{2t} \bigr)^{\frac{3}{2}}}
  <0.
\end{equation*}
For $T''(r)$, we have
\begin{equation*}
  \frac{T''(r)}{T'(r)}
  = 
  \frac{4r^3}{r^4-1}
  +\frac{t-1}{r}
  -\frac{2tr^3}{A}
  -\frac{3(A^{-\frac{1}{2}}tr^3-tr^{2t-1})}{ A^{\frac{1}{2}} -r^{2t}},
\end{equation*}
namely,
\begin{equation*}
  \begin{split}
    (r^4-1)rA(A^{\frac{1}{2}} -r^{2t}) \frac{T''}{T'}
    &=
    A^{\frac{1}{2}}
    \left[ (t^2-2t)r^8 +(3+4t-2t^2)r^4 +1-2t+t^2 \right] \\
    &\hskip1.1em
    -r^{2t}
    \left[ (t-2t^2)r^8 +(3-2t+4t^2)r^4 +1+t-2t^2 \right].
  \end{split}
\end{equation*}
Let $R=r^4$, and
\begin{equation} \label{eq:36}
  \beta(R)=(t^2-2t)R^2 +(3+4t-2t^2)R +1-2t+t^2.
\end{equation}
Then, we have
\begin{multline} \label{eq:34}
  (r^4-1)rA(A^{\frac{1}{2}} -r^{2t}) \frac{T''(r)}{T'(r)} \\
  =
  (1-t+tR)^{\frac{1}{2}} \beta(R) 
  -\left[ \beta(R)+3t(1-t)(R-1)^2 \right] R^{\frac{t}{2}}.
\end{multline}
In order to prove $T''(r)>0$, we shall prove that the right hand side of
\eqref{eq:34} is less than zero for any $R>1$.

When $\beta(R)\leq0$, i.e., $R\geq 1+\frac{3+\sqrt{9+32 t-16 t^2}}{4 t-2 t^2}$.
Observing that
\begin{equation*}
  1-t+tR>R^t, \quad \forall R>1,
\end{equation*}
the right hand side of \eqref{eq:34} can be rewritten as
\begin{equation*}
  \left[
    (1-t+tR)^{\frac{1}{2}}
    - R^{\frac{t}{2}} 
  \right]
  \beta(R) 
  -3t(1-t)(R-1)^2 R^{\frac{t}{2}},
\end{equation*}
which is obviously less than zero.

Consider the remaining case
\begin{equation}\label{eq:35}
  1<R<1+\frac{3+\sqrt{9+32 t-16 t^2}}{4 t-2 t^2},
\end{equation}
where $\beta(R)>0$.
Let
\begin{equation*}
  \phi(R)=
  \frac{1}{2} \log (1-t+tR)
  +\log \beta(R) 
  -\log \left[ \beta(R)+3t(1-t)(R-1)^2 \right]
  -\frac{t}{2} \log R.
\end{equation*}
We want to prove that $\phi(R)<0$ for $R$ restricted within the range of \eqref{eq:35}.
Recalling \eqref{eq:36}, we have
\begin{equation*}
  \beta'(R)=(2t^2-4t)R + 3+4t-2t^2,
\end{equation*}
and then
\begin{equation*}
  \begin{split}
    \phi'(R)
    &=
    \frac{t}{2 (1-t+tR)}
    -\frac{t}{2 R}
    +\frac{\beta '(R)}{\beta (R)}
    -\frac{\beta '(R)+6t(1-t)(R-1)}{\beta (R)+3t(1-t)(R-1)^2} \\
    &=
    t(1-t)(R-1)
    \left(
      \frac{1}{2R(1-t+tR)}
      -\frac{3(3R+5)}{\beta(R)[\beta (R)+3t(1-t)(R-1)^2]}
    \right).
  \end{split}
\end{equation*}
Let
\begin{equation*}
  \varphi(R) =
  6R(1-t+tR)(3R+5)
  -\beta(R)[\beta (R)+3t(1-t)(R-1)^2],
\end{equation*}
which is a quartic polynomial of $R$.
Simple computations show that
\begin{equation} \label{eq:38}
  \varphi(1)=32,
  \quad
  \varphi'(1)=6 (7+8 t),
  \quad
  \varphi''(1)=2(9+70t+4t^2), 
\end{equation}
and that
\begin{equation*}
  \varphi'''(R)=
  6 t \left[
    (8t -20t^2 +8t^3)R
    +21 -5 t +20 t^2 -8 t^3
  \right].
\end{equation*}
We can prove that $\varphi'''(R)>0$ when $R$ is within the range of \eqref{eq:35}.
In fact, note that
\begin{equation*}
  \varphi'''\left(
    1+\frac{3+\sqrt{9+32 t-16 t^2}}{4 t-2 t^2}
  \right)
  = 6t
  \left(
    27-9t
    +(2-4t)\sqrt{9+32 t-16 t^2}
  \right).
\end{equation*}
Since
\begin{equation*}
  \frac{\dd}{\dd t} 
  \left(
    27-9t
    +(2-4t)\sqrt{9+32 t-16 t^2}
  \right)
  =
  -9
  -\frac{4(1+56t-32t^2)}{\sqrt{9+32 t-16 t^2}}
  <0,
\end{equation*}
then
\begin{equation*}
  27-9t +(2-4t)\sqrt{9+32 t-16 t^2} >8,
\end{equation*}
which implies
\begin{equation*}
  \varphi'''\left(
    1+\frac{3+\sqrt{9+32 t-16 t^2}}{4 t-2 t^2}
  \right)>0.
\end{equation*}
Besides, 
\begin{equation*}
  \varphi'''(1)=18t(7+t)>0.
\end{equation*}
Due to the fact that $\varphi'''$ is a linear function of $R$,
$\varphi'''(R)>0$ for any $R$ given by \eqref{eq:35}.
Now recalling \eqref{eq:38}, we see that $\varphi(R)>0$, implying that
$\phi'(R)<0$. By $\phi(1)=0$, there is $\phi(R)<0$.
By our construction of $\phi$, the right hand side of \eqref{eq:34} is less than
zero when $R$ is within the range of \eqref{eq:35}.

Now we have proved that the right hand side of \eqref{eq:34} is less than zero
for every $R>1$.
Namely, $\frac{T''(r)}{T'(r)}<0$ for every $r>1$.
Thus, $T''(r)>0$ for every $r>1$.
The proof of this lemma is completed.
\end{proof}

Using Lemma \ref{lem002}, we can develop a method to verify
$\Theta_4(r)>\frac{\pi}{2}$ on one interval through evaluating the values of
$\Theta_4(r)$ at only three points.

\begin{lemma}\label{lem012}
Let $\LB$ be a lower bound of $\lambda_k$, $\UB$ an upper bound of
$\Lambda_k$, and $\Delta$ a small positive number.
For $1<r_L<\hat{r}<r_R$, if 
\begin{equation} \label{eq:51} \tag{A} 
  \LB(\hat{r})>\frac{\pi}{2},
  \quad
  \frac{\UB(\hat{r}-\Delta)}{\log (\hat{r}-\Delta)}
  >
  \frac{\LB(\hat{r})}{\log \hat{r}}
  >
  \frac{\UB(\hat{r}+\Delta)}{\log (\hat{r}+\Delta)},
\end{equation}
\begin{equation} \label{eq:48} \tag{B}
  \frac{\LB(\hat{r})}{\log \hat{r}}\log r_L
  +\left(
    \frac{\LB(\hat{r})}{\log \hat{r}}
    -\frac{\UB(\hat{r}+\Delta)}{\log (\hat{r}+\Delta)}
  \right)
  \frac{\hat{r}-r_L}{\Delta} \log r_L
  >\frac{\pi}{2},
\end{equation}
and
\begin{equation} \label{eq:49} \tag{C}
  \frac{\LB(\hat{r})}{\log \hat{r}} \log r_R
  -\left(
    \frac{\UB(\hat{r}-\Delta)}{\log (\hat{r}-\Delta)}
    -\frac{\LB(\hat{r})}{\log \hat{r}}
  \right)
  \frac{r_R-\hat{r}}{\Delta} \log r_R
  >\frac{\pi}{2},
\end{equation}
then there is
\begin{equation}\label{eq:50}
  \Theta_4(r)>\frac{\pi}{2}, \quad \forall r\in [r_L, r_R].
\end{equation}
\end{lemma}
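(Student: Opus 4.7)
The plan is to combine the two-sided bounds $\LB(r) \le \Theta_4(r) \le \UB(r)$ supplied by Lemma~\ref{lem001} with the shape information of Lemma~\ref{lem002}, namely that $F(r) := \Theta_4(r)/\log r$ is strictly decreasing and strictly convex on $(1,+\infty)$. The key idea is that approximations of $F$ at the three test points $\hat r - \Delta$, $\hat r$, $\hat r + \Delta$, obtained via $\LB(\hat r)/\log\hat r$ and $\UB(\hat r\pm\Delta)/\log(\hat r\pm\Delta)$, determine affine lower bounds for $F$ on either side of $\hat r$; condition~\eqref{eq:51} is exactly what is needed to ensure these three surrogate values inherit the strict monotonicity of $F$, so that the secant-slope estimates below are nontrivial.

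I would split $[r_L, r_R]$ at $\hat r$ and handle each half symmetrically. For $r \in [r_L,\hat r]$, the monotonicity of secant slopes of the convex $F$ with nodes $r < \hat r < \hat r + \Delta$ gives
\begin{equation*}
F(r) \ge F(\hat r) + \frac{\hat r - r}{\Delta}\bigl[F(\hat r) - F(\hat r+\Delta)\bigr].
\end{equation*}
Replacing $F(\hat r)$ by $\LB(\hat r)/\log\hat r$ and $F(\hat r+\Delta)$ by $\UB(\hat r+\Delta)/\log(\hat r+\Delta)$---whose difference is kept strictly positive by~\eqref{eq:51}---yields an explicit affine lower bound $L(r)$ for $F(r)$. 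Multiplying by $\log r > 0$, a short computation shows $(L(r)\log r)'' = -2b/r - L(r)/r^2$, where $b := -L'(r) > 0$; this is negative throughout $[r_L,\hat r]$ because $L(r) \ge L(\hat r) = \LB(\hat r)/\log\hat r > 0$. Hence $L(r)\log r$ is concave on $[r_L,\hat r]$ and attains its minimum at an endpoint: at $\hat r$ the value equals $\LB(\hat r) > \pi/2$ by~\eqref{eq:51}, and at $r_L$ the value is exactly the left-hand side of~\eqref{eq:48}, exceeding $\pi/2$ by hypothesis.

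For $r \in [\hat r, r_R]$ the same secant-slope inequality applied to the nodes $\hat r - \Delta < \hat r < r$ produces a mirror-image affine lower bound $L_R(r)$; positivity of the relevant coefficient is guaranteed by the other inequality in~\eqref{eq:51}, and the same concavity computation for $L_R(r)\log r$ reduces the verification to the endpoints, where the values are $\LB(\hat r)>\pi/2$ at $\hat r$ and the left-hand side of~\eqref{eq:49} at $r_R$, both exceeding $\pi/2$. Combining the two halves yields~\eqref{eq:50}. The only real subtlety I expect is book-keeping: one has to track the direction of every substitution of an $F$-value by a $\LB/\UB$ surrogate so that $L$ and $L_R$ are \emph{genuine} lower bounds for $F$, and check that they remain strictly positive on their respective half-intervals so that the concavity identity truly applies. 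Beyond that, no hard analysis is needed; the lemma is essentially an organized application of convexity plus monotonicity.
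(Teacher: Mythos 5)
Your proposal is correct and follows essentially the same route as the paper: split $[r_L,r_R]$ at $\hat r$, use the secant-slope inequality for the convex function $\Theta_4(r)/\log r$ with the auxiliary nodes $\hat r\pm\Delta$, replace the values at the nodes by the $\LB/\UB$ surrogates in the directions dictated by the signs of their coefficients (with condition (A) keeping the slope coefficients positive), and then observe that the resulting affine-times-$\log$ minorant is concave, so its minimum over each half-interval is attained at an endpoint, where (A), (B), (C) give the bound $\pi/2$. Your explicit second-derivative computation and the remark about positivity of the affine factor just make precise what the paper dismisses with ``one can easily see that $\phi(r)$ is strictly concave.''
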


\begin{proof}
  We first consider the case when $r\in[r_L,\hat{r}]$.
By Lemma \ref{lem002}, $\frac{\Theta_4(r)}{\log r}$ is strictly convex for
$r>1$, implying that
\begin{equation*}
  \begin{split}
    \frac{\Theta_4(r)}{\log r}    
    &\geq 
    \frac{\Theta_4(\hat{r})}{\log \hat{r}}    
    +\left(
      \frac{\Theta_4(\hat{r})}{\log \hat{r}}
      -\frac{\Theta_4(\hat{r}+\Delta)}{\log (\hat{r}+\Delta)}
    \right)
    \frac{\hat{r}-r}{\Delta} \\
    &\geq
    \frac{\LB(\hat{r})}{\log \hat{r}}
    +\left(
      \frac{\LB(\hat{r})}{\log \hat{r}}
      -\frac{\UB(\hat{r}+\Delta)}{\log (\hat{r}+\Delta)}
    \right)
    \frac{\hat{r}-r}{\Delta}.
  \end{split}
\end{equation*}
Let
\begin{equation*}
  \phi(r)=
  \frac{\LB(\hat{r})}{\log \hat{r}} \log r
  +\left(
    \frac{\LB(\hat{r})}{\log \hat{r}}
    -\frac{\UB(\hat{r}+\Delta)}{\log (\hat{r}+\Delta)}
  \right)
  \frac{\hat{r}-r}{\Delta} \log r,
\end{equation*}
then $\Theta_4(r)\geq \phi(r)$ for $r\in[r_L,\hat{r}]$.
Now by the condition \eqref{eq:51}, one can easily see that $\phi(r)$ is
strictly concave, and $\phi(\hat{r})=\LB(\hat{r})>\frac{\pi}{2}$.
Besides, the condition \eqref{eq:48} says $\phi(r_L)>\frac{\pi}{2}$.
Therefore,
\begin{equation*}
  \phi(r)>\frac{\pi}{2} \text{ on } [r_L,\hat{r}],
\end{equation*}
which implies \eqref{eq:50} for $r\in[r_L,\hat{r}]$.

The proof for $r\in[\hat{r},r_R]$ is similar.
In fact, by the convexity of $\frac{\Theta_4(r)}{\log r}$, there is
\begin{equation*}
  \begin{split}
    \frac{\Theta_4(r)}{\log r}    
    &\geq 
    \frac{\Theta_4(\hat{r})}{\log \hat{r}}    
    -\left(
      \frac{\Theta_4(\hat{r}-\Delta)}{\log (\hat{r}-\Delta)}
      -\frac{\Theta_4(\hat{r})}{\log \hat{r}}
    \right)
    \frac{r-\hat{r}}{\Delta} \\
    &\geq 
    \frac{\LB(\hat{r})}{\log \hat{r}}
    -\left(
      \frac{\UB(\hat{r}-\Delta)}{\log (\hat{r}-\Delta)}
      -\frac{\LB(\hat{r})}{\log \hat{r}}
    \right)
    \frac{r-\hat{r}}{\Delta}.
  \end{split}
\end{equation*}
Now by conditions \eqref{eq:51} and \eqref{eq:49}, one will see that
\eqref{eq:50} is true for any $r\in[\hat{r},r_R]$.
\end{proof}

With Lemmas \ref{lem001} and \ref{lem012}, we can prove
$\Theta_4(r)>\frac{\pi}{2}$ for $r\in(1.22,8.62)$ by the numerical estimation.

\begin{lemma} \label{lem013}
$\Theta_4(r)>\frac{\pi}{2}$ holds for any $r\in(1.22,8.62)$.
\end{lemma}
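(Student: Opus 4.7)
The plan is to apply Lemma \ref{lem012} finitely many times so as to chain-cover the interval $[1.22, 8.62]$. For each application I would choose a triple $1.22 \le r_L^{(j)} < \hat{r}^{(j)} < r_R^{(j)} \le 8.62$, together with a step size $\Delta^{(j)} > 0$ and a partition parameter $k^{(j)}$, and then verify the three conditions \eqref{eq:51}, \eqref{eq:48}, \eqref{eq:49} at the test points $\hat{r}^{(j)} - \Delta^{(j)}$, $\hat{r}^{(j)}$, $\hat{r}^{(j)} + \Delta^{(j)}$. The six numerical quantities entering those conditions are values of the elementary functions \eqref{eq:44}, \eqref{eq:43}, \eqref{eq:47}, \eqref{eq:46}, which I would evaluate by interval arithmetic with outward rounding, so that $\LB(\cdot) \le \lambda_{k}(\cdot) < \Theta_4(\cdot) < \Lambda_{k}(\cdot) \le \UB(\cdot)$ holds rigorously and each verified inequality reduces to an honest comparison of rationals.

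Concretely, I would proceed in three steps. First, implement a rigorous routine that, given $r$ and $k$, returns $\LB(r)$ and $\UB(r)$ using the closed forms \eqref{eq:44} and \eqref{eq:47} with directed rounding on every arithmetic operation and on each $\arcsin$, $\log$, and square root call. Second, specify an increasing sequence $1.22 = s_0 < s_1 < \cdots < s_N = 8.62$ and, for each $j = 1, \dots, N$, an associated tuple $(r_L^{(j)}, \hat{r}^{(j)}, r_R^{(j)}, \Delta^{(j)}, k^{(j)})$ with $r_L^{(j)} \le s_{j-1}$ and $r_R^{(j)} \ge s_j$, so that the closed subintervals $[r_L^{(j)}, r_R^{(j)}]$ overlap and cover $[1.22, 8.62]$. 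Third, tabulate the six numerical evaluations per $j$ and check conditions \eqref{eq:51}--\eqref{eq:49}. Each successful application yields $\Theta_4(r) > \pi/2$ on $[r_L^{(j)}, r_R^{(j)}]$, and the union of these subintervals gives the conclusion on $(1.22, 8.62)$.

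The main obstacle is a computational balancing act rather than any new analytic point. The partition parameter $k^{(j)}$ must be taken large enough that the gap $\UB - \LB$ at the three test points is small, because the linear lower bounds in \eqref{eq:48} and \eqref{eq:49} extrapolate $\LB(\hat{r})/\log\hat{r}$ away from $\hat{r}$ with slope proportional to $\bigl(\UB(\hat{r} \pm \Delta) - \LB(\hat{r})\bigr)/\Delta$; simultaneously, the subintervals $[r_L^{(j)}, r_R^{(j)}]$ must be short enough that this extrapolated lower bound on $\Theta_4(r)/\log r$ still exceeds $\pi/(2\log r)$ at both endpoints. The tightest regime is expected to be near $r = 8.62$, where by Lemma \ref{lem010} the true value of $\Theta_4(r)$ is only just above $\pi/2$, forcing short subintervals and comparatively larger $k^{(j)}$ there; near the lower end $r \approx 1.22$ the bound from Lemma \ref{lem011} is likewise nearly sharp, so attention is also needed. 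No further analytic ingredient is required beyond Lemmas \ref{lem001}, \ref{lem002}, and \ref{lem012}: once an adequate finite table of tuples is exhibited, the proof is completed simply by displaying the verified numerical inequalities.
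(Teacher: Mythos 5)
Your proposal is essentially the paper's own proof: the authors apply Lemma \ref{lem012} in a chain of fourteen overlapping subintervals covering $[1.22,8.87]\supset[1.22,8.62)$, with $\Delta=10^{-3}$ and $k=20000$ (and $k=80000$ for the last subinterval near $r=8.62$, exactly the tight regime you identify), tabulating $\UB(\hat{r}\pm\Delta)$ and $\LB(\hat{r})$ to verify conditions \eqref{eq:51}--\eqref{eq:49}. The only thing missing from your write-up is the actual table of verified numerical values, which for a lemma of this kind constitutes the substance of the proof and must be exhibited.
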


\begin{proof}
We let $\Delta=\frac{1}{1000}$ in this proof.
For $\hat{r}=1.229$, we compute $\lambda_k, \Lambda_k \,(k=20000)$ with an error
of less than $10^{-9}$, and then obtain the following correct bounds:
\begin{equation*}
  \UB(\hat{r}-\Delta)=1.573533,
  \quad
  \LB(\hat{r})=1.573553,
  \quad
  \UB(\hat{r}+\Delta)=1.573577,
\end{equation*}
satisfying the condition \eqref{eq:51}.
Let $r_L=1.220$, then the left hand side of the condition \eqref{eq:48} minus
its right hand side is approximately equal to $1.2\times10^{-4}$.
Let $r_R=1.238$, then the left hand side of the condition \eqref{eq:49} minus
its right hand side is approximately equal to $5.1\times10^{-4}$.
By Lemma \ref{lem012}, $\Theta_4(r)>\frac{\pi}{2}$ holds on $[1.22,1.238]$.
See the first row of the following table.
The computations for other rows are similar, except that $k=80000$ for the last
row. 

\renewcommand{\arraystretch}{1.2}
\rowcolors{2}{gray!15}{gray!30}
\begin{longtable}{cccccccc}
  $\hat{r}$
  & $\UB(\hat{r}-\Delta)$
  & $\LB(\hat{r})$
  & $\UB(\hat{r}+\Delta)$
  & $r_L$
  & $r_R$
  & \eqref{eq:48}$\cdot10^4$
  & \eqref{eq:49}$\cdot10^4$ \\
  \endfirsthead
  \rowcolor{white}
  $\hat{r}$
  & $\UB(\hat{r}-\Delta)$
  & $\LB(\hat{r})$
  & $\UB(\hat{r}+\Delta)$
  & $r_L$
  & $r_R$
  & \eqref{eq:48}$\cdot10^4$
  & \eqref{eq:49}$\cdot10^4$ \\
  \endhead
  \rowcolor{white} \multicolumn{8}{r}{\textit{Continued on next page}} \\
  \endfoot
  \endlastfoot
  1.229
  & 1.573533
  & 1.573553
  & 1.573577
  & 1.220
  & 1.238           
  & 1.2
  & 5.1 \\
  1.248
  & 1.573953
  & 1.573975
  & 1.573999
  & 1.238
  & 1.259
  & 4.3
  & 4.1 \\
  1.271
  & 1.574491
  & 1.574514
  & 1.574539
  & 1.259
  & 1.285
  & 4.7
  & 0.77 \\
  1.300
  & 1.575211
  & 1.575235
  & 1.575262
  & 1.285
  & 1.317
  & 3.7
  & 2.0 \\
  1.336
  & 1.576165
  & 1.576190
  & 1.576219
  & 1.317
  & 1.357
  & 2.7
  & 4.1 \\
  1.381
  & 1.577438
  & 1.577466
  & 1.577497
  & 1.357
  & 1.409
  & 4.0
  & 0.78 \\
  1.441
  & 1.579256
  & 1.579286
  & 1.579319
  & 1.409
  & 1.478
  & 3.2
  & 3.1 \\
  1.522
  & 1.581882
  & 1.581914
  & 1.581949
  & 1.478
  & 1.574
  & 2.9
  & 2.1 \\
  1.638
  & 1.585887
  & 1.585921
  & 1.585958
  & 1.574
  & 1.715
  & 1.1
  & 2.3 \\
  1.814
  & 1.592262
  & 1.592297
  & 1.592335
  & 1.715
  & 1.937
  & 0.54
  & 1.9 \\
  2.104
  & 1.602875
  & 1.602909
  & 1.602947
  & 1.937
  & 2.321
  & 1.6
  & 1.9 \\
  2.641
  & 1.621023
  & 1.621050
  & 1.621085
  & 2.321
  & 3.075
  & 2.0
  & 3.2 \\
  3.770
  & 1.649591
  & 1.649599
  & 1.649631
  & 3.075
  & 4.720
  & 5.9
  & 6.5 \\
  6.500
  & 1.682683
  & 1.682680
  & 1.682697
  & 4.720
  & 8.870          
  & 32
  & 7.8
\end{longtable}

The conclusion of this lemma follows from these numerical computations in the
above table.
\end{proof}

We remark that $\lambda_k$ and $\Lambda_k$ are not efficient approaches to
estimate lower and upper bounds of $\Theta_4$.
If an efficient approximation method is used to compute correct lower and upper bounds
of $\Theta_4(r)$ for each given $r$, the number of rows in the above table will
be reduced.
Here we use $\lambda_k$ and $\Lambda_k$ because they are very simple and
computable. The computation only involves computing the values of elementary
functions at some points, so the strictly correct lower and upper bounds of these
values can be obtained.

Combining Lemmas \ref{lem010}, \ref{lem011} and \ref{lem013}, we have proved the
inequality \eqref{eq:69}, which together with the inequality \eqref{eq:23} and
Lemma \ref{lem01} implies \eqref{eq:10}. 
Thus, Lemma \ref{lem008} holds for the case when $1\leq q\leq4$.

Note that Lemma \ref{lem008} for $q>4$ follows directly from
Lemmas \ref{lem009} and \ref{lem010}.

\section*{Conflict of interest}
On behalf of all authors, the corresponding author states that there is no conflict of interest.

%%%%%%%%%%%%%%%%%%%%%%%%%%%%%%%%%%%%%%%%%%%
% \bibliographystyle{siam}
% \bibliography{article}

\begin{thebibliography}{10}

\bibitem{AL.JDG.23-1986.175}
{\sc U.~Abresch and J.~Langer}, {\em The normalized curve shortening flow and
  homothetic solutions}, J. Differential Geom., 23 (1986), pp.~175--196.

\bibitem{Ale.CRDASUN.35-1942.131}
{\sc A.~Alexandroff}, {\em Existence and uniqueness of a convex surface with a
  given integral curvature}, C. R. (Doklady) Acad. Sci. URSS (N.S.), 35 (1942),
  pp.~131--134.

\bibitem{And.Invent.138-1999.151}
{\sc B.~Andrews}, {\em Gauss curvature flow: the fate of the rolling stones},
  Invent. Math., 138 (1999), pp.~151--161.

\bibitem{And.JAMS.16-2003.443}
\leavevmode\vrule height 2pt depth -1.6pt width 23pt, {\em Classification of
  limiting shapes for isotropic curve flows}, J. Amer. Math. Soc., 16 (2003),
  pp.~443--459 (electronic).

\bibitem{BF.JDE.266-2019.7980}
{\sc K.~J. B\"{o}r\"{o}czky and F.~Fodor}, {\em The {$L_p$} dual {M}inkowski
  problem for {$p>1$} and {$q>0$}}, J. Differential Equations, 266 (2019),
  pp.~7980--8033.

\bibitem{BHZ.IMRNI.2016.1807}
{\sc K.~J. B\"or\"oczky, P.~Heged{\H u}s, and G.~Zhu}, {\em On the discrete
  logarithmic {M}inkowski problem}, Int. Math. Res. Not. IMRN,  (2016),
  pp.~1807--1838.

\bibitem{BHP.JDG.109-2018.411}
{\sc K.~J. B\"{o}r\"{o}czky, M.~Henk, and H.~Pollehn}, {\em Subspace
  concentration of dual curvature measures of symmetric convex bodies}, J.
  Differential Geom., 109 (2018), pp.~411--429.

\bibitem{BLYZ.JAMS.26-2013.831}
{\sc K.~J. B{\"o}r{\"o}czky, E.~Lutwak, D.~Yang, and G.~Zhang}, {\em The
  logarithmic {M}inkowski problem}, J. Amer. Math. Soc., 26 (2013),
  pp.~831--852.

\bibitem{BLY+.Adv.356-2019.106805}
{\sc K.~J. B\"{o}r\"{o}czky, E.~Lutwak, D.~Yang, G.~Zhang, and Y.~Zhao}, {\em
  The dual {M}inkowski problem for symmetric convex bodies}, Adv. Math., 356
  (2019), pp.~106805, 30.

\bibitem{BCD.Acta.219-2017.1}
{\sc S.~Brendle, K.~Choi, and P.~Daskalopoulos}, {\em Asymptotic behavior of
  flows by powers of the {G}aussian curvature}, Acta Math., 219 (2017),
  pp.~1--16.

\bibitem{CHZ.MA.373-2019.953}
{\sc C.~Chen, Y.~Huang, and Y.~Zhao}, {\em Smooth solutions to the {$L_p$} dual
  {M}inkowski problem}, Math. Ann., 373 (2019), pp.~953--976.

\bibitem{CCL.AP.14-2021.689}
{\sc H.~Chen, S.~Chen, and Q.-R. Li}, {\em Variations of a class of
  {M}onge-{A}mp\`ere-type functionals and their applications}, Anal. PDE, 14
  (2021), pp.~689--716.

\bibitem{CL.JFA.281-2021.109139}
{\sc H.~Chen and Q.-R. Li}, {\em The {$L_ p$} dual {M}inkowski problem and
  related parabolic flows}, J. Funct. Anal., 281 (2021), pp.~Paper No. 109139,
  65.

\bibitem{CLLX.JGA.32-2022.40}
{\sc L.~Chen, Y.~Liu, J.~Lu, and N.~Xiang}, {\em Existence of smooth even
  solutions to the dual {O}rlicz-{M}inkowski problem}, J. Geom. Anal., 32
  (2022), pp.~Paper No. 40, 25.

\bibitem{CL.Adv.333-2018.87}
{\sc S.~Chen and Q.-R. Li}, {\em On the planar dual {M}inkowski problem}, Adv.
  Math., 333 (2018), pp.~87--117.

\bibitem{CLZ.TAMS.371-2019.2623}
{\sc S.~Chen, Q.-r. Li, and G.~Zhu}, {\em The logarithmic {M}inkowski problem
  for non-symmetric measures}, Trans. Amer. Math. Soc., 371 (2019),
  pp.~2623--2641.

\bibitem{DG.PJASAMS.70-1994.252}
{\sc C.~Dohmen and Y.~Giga}, {\em Selfsimilar shrinking curves for anisotropic
  curvature flow equations}, Proc. Japan Acad. Ser. A Math. Sci., 70 (1994),
  pp.~252--255.

\bibitem{Gag.DMJ.72-1993.441}
{\sc M.~E. Gage}, {\em Evolving plane curves by curvature in relative
  geometries}, Duke Math. J., 72 (1993), pp.~441--466.

\bibitem{GHW+.CVPDE.58-2019.12}
{\sc R.~J. Gardner, D.~Hug, W.~Weil, S.~Xing, and D.~Ye}, {\em General volumes
  in the {O}rlicz-{B}runn-{M}inkowski theory and a related {M}inkowski problem
  {I}}, Calc. Var. Partial Differential Equations, 58 (2019), pp.~Paper No. 12,
  35.

\bibitem{GHXY.CVPDE.59-2020.15}
{\sc R.~J. Gardner, D.~Hug, S.~Xing, and D.~Ye}, {\em General volumes in the
  {O}rlicz-{B}runn-{M}inkowski theory and a related {M}inkowski problem {II}},
  Calc. Var. Partial Differential Equations, 59 (2020), pp.~Paper No. 15, 33.

\bibitem{HP.Adv.323-2018.114}
{\sc M.~Henk and H.~Pollehn}, {\em Necessary subspace concentration conditions
  for the even dual {M}inkowski problem}, Adv. Math., 323 (2018), pp.~114--141.

\bibitem{HJ.JFA.277-2019.2209}
{\sc Y.~Huang and Y.~Jiang}, {\em Variational characterization for the planar
  dual {M}inkowski problem}, J. Funct. Anal., 277 (2019), pp.~2209--2236.

\bibitem{HLX.Adv.281-2015.906}
{\sc Y.~Huang, J.~Liu, and L.~Xu}, {\em On the uniqueness of
  {$L_p$}-{M}inkowski problems: the constant {$p$}-curvature case in
  {$\Bbb{R}^3$}}, Adv. Math., 281 (2015), pp.~906--927.

\bibitem{HLYZ.Acta.216-2016.325}
{\sc Y.~Huang, E.~Lutwak, D.~Yang, and G.~Zhang}, {\em Geometric measures in
  the dual {B}runn-{M}inkowski theory and their associated {M}inkowski
  problems}, Acta Math., 216 (2016), pp.~325--388.

\bibitem{HLYZ.JDG.110-2018.1}
\leavevmode\vrule height 2pt depth -1.6pt width 23pt, {\em The
  {$L_p$}-{A}leksandrov problem for {$L_p$}-integral curvature}, J.
  Differential Geom., 110 (2018), pp.~1--29.

\bibitem{HZ.Adv.332-2018.57}
{\sc Y.~Huang and Y.~Zhao}, {\em On the {$L_p$} dual {M}inkowski problem}, Adv.
  Math., 332 (2018), pp.~57--84.

\bibitem{JW.JDE.263-2017.3230}
{\sc Y.~Jiang and Y.~Wu}, {\em On the 2-dimensional dual {M}inkowski problem},
  J. Differential Equations, 263 (2017), pp.~3230--3243.

\bibitem{KM.MAMS.277-2022.1}
{\sc A.~V. Kolesnikov and E.~Milman}, {\em Local {$L^p$}-{B}runn-{M}inkowski
  inequalities for {$p<1$}}, Mem. Amer. Math. Soc., 277 (2022), pp.~1--78.

\bibitem{LW}
{\sc H.~Li and Y.~Wan}, {\em Classification of solutions for the planar
  isotropic {$L_p$} dual {M}inkowski problem}.
\newblock arXiv:2209.14630.

\bibitem{LLL.IMRNI.-2022.9114}
{\sc Q.-R. Li, J.~Liu, and J.~Lu}, {\em Nonuniqueness of solutions to the
  {$L_p$} dual {M}inkowski problem}, Int. Math. Res. Not. IMRN,  (2022),
  pp.~9114--9150.

\bibitem{LSW.JEMSJ.22-2020.893}
{\sc Q.-R. Li, W.~Sheng, and X.-J. Wang}, {\em Flow by {G}auss curvature to the
  {A}leksandrov and dual {M}inkowski problems}, J. Eur. Math. Soc. (JEMS), 22
  (2020), pp.~893--923.

\bibitem{LL.TAMS.373-2020.5833}
{\sc Y.~Liu and J.~Lu}, {\em A flow method for the dual {O}rlicz-{M}inkowski
  problem}, Trans. Amer. Math. Soc., 373 (2020), pp.~5833--5853.

\bibitem{LYZ.Adv.329-2018.85}
{\sc E.~Lutwak, D.~Yang, and G.~Zhang}, {\em {$L_p$} dual curvature measures},
  Adv. Math., 329 (2018), pp.~85--132.

\bibitem{Sta.Adv.180-2003.290}
{\sc A.~Stancu}, {\em On the number of solutions to the discrete
  two-dimensional {$L_0$}-{M}inkowski problem}, Adv. Math., 180 (2003),
  pp.~290--323.

\bibitem{Yag.CVPDE.26-2006.49}
{\sc H.~Yagisita}, {\em Non-uniqueness of self-similar shrinking curves for an
  anisotropic curvature flow}, Calc. Var. Partial Differential Equations, 26
  (2006), pp.~49--55.

\bibitem{Zha.CVPDE.56-2017.18}
{\sc Y.~Zhao}, {\em The dual {M}inkowski problem for negative indices}, Calc.
  Var. Partial Differential Equations, 56 (2017), p.~56:18.

\bibitem{Zha.JDG.110-2018.543}
\leavevmode\vrule height 2pt depth -1.6pt width 23pt, {\em Existence of
  solutions to the even dual {M}inkowski problem}, J. Differential Geom., 110
  (2018), pp.~543--572.

\bibitem{Zhu.Adv.262-2014.909}
{\sc G.~Zhu}, {\em The logarithmic {M}inkowski problem for polytopes}, Adv.
  Math., 262 (2014), pp.~909--931.

\end{thebibliography}

\end{document}